\newtheorem{thrm}{Theorem}
\newtheorem{lem}[thrm]{Lemma}
\newtheorem{cor}[thrm]{Corollary}
\newtheorem{defn}[thrm]{Definition}
\newtheorem{prop}[thrm]{Proposition}
\newtheorem{exam}[thrm]{Example}
\newtheorem*{theorem-non}{Theorem}
\def \Dj{\mbox{\raise0.3ex\hbox{-}\kern-0.4em D}}
\begin{document}

\title{Polynomial entropy on the $n$-fold symmetric product and its suspension}

\author{Ma\v{s}a \Dj ori\'c\\
Matemati\v{c}ki institut SANU\\
Knez Mihailova 36\\
11000 Beograd\\
Serbia\\
masha@mi.sanu.ac.rs\\
\href{https://orcid.org/0000-0003-1364-2339}{https://orcid.org/0000-0003-1364-2339}
}

\maketitle

\begin{abstract} We prove that the polynomial entropy of the induced map $F_n(f)$ on the $n$-fold symmetric product of a compact space $X$ and its suspension are both equal to $nh_{pol}(f)$, when $f:X\to X$ is a homeomorphism with a finite chain recurrent set $\mathcal{CR}(f)$. We also give a lower bound for the polynomial entropy on the suspension, for a homeomorphism $f$ with at least one wandering point, under certain assumptions.
\end{abstract}

\medskip

{\it 2020 Mathematical  subject classification:} Primary 37B40, Secondary 54F16, 37A35 \\
{\it Keywords:}  Polynomial entropy, homeomorphism, hyperspace, $n$-fold symmetric product, symmetric product suspension

\section{Introduction}

Every continuous map $f:X\to X$ on a compact metric space $X$ induces a continuous map (called the induced map) on the hyperspace $2^X$ of all nonempty closed subsets. Its subspace, namely the hyperspace $F_n(X)$ of all nonempty subsets with at most $n$ points (for all $n\in\mathbb{N}$), is of main interest in our paper. Furthermore, Barragán defined the $n$-fold symmetric product suspension in \cite{FB}, $n\geqslant 2$, as the quotient space $$\mathcal{S}F_n(X)=F_n(X)/F_1(X).$$ They show that $\mathcal{S}F_n(X)$ is a compact metric space and define the induced map $\mathcal{S}F_n(f)$ which is also continuous.

In the last decades a series of results was obtained that show possible relations between the given (individual) dynamics on $X$ and the induced one (collective dynamics) on the hyperspaces. Without attempting to give complete references, we mention just a few: Borsuk and Ulam~\cite{BU}, Bauer and Sigmund~\cite{BS}, Rom\'an-Flores~\cite{RF}, Banks~\cite{B}, Acosta, Illanes and M\'endez-Lango~\cite{AIM}. In \cite{BST}, Barragán et al. studied the dynamical properties of the dynamical system $(\mathcal{S}F_n(X),\mathcal{S}F_n(f))$, and more generally $(\mathcal{S}F^{m}_n(X),\mathcal{S}F^{m}_n(f))$, in \cite{BST2}.

Kwietniak and Oprocha in~\cite{KO}, Lampart and Raith~\cite{LR}, Hern\'andez and M\'endez~\cite{HM}, Arbieto and Bohorquez~\cite{AB}, and others studied the topological entropy of the induced maps.

The topological entropy is a classical measure of complexity of dynamical systems; one that measures the average exponential growth of the number of distinguishable orbit segments. For dynamical systems with vanishing topological entropy, this growth rate is subexponential, so it is justified to measure it at the polynomial scale. The notion of the polynomial entropy was first introduced by Kurka in~\cite{K} for open covers. Marco defined it for dynamical systems in metric spaces in the context of Hamiltonian integrable systems (see~\cite{M1,M2}). It has been further studied in various contexts by Labrousse~\cite{L1,L2,L3}, Labrousse and Marco~\cite{LM}, Bernard and Labrousse~\cite{BL}, Artigue, Carrasco--Olivera and Monteverde~\cite{AOM}, Haseaux and Le Roux~\cite{HL}, Roth, Roth and Snoha~\cite{RRS}, Correa and de Paula~\cite{CP}, etc.

Some dynamical properties of the topological entropy remain the same for the polynomial entropy, such as the conjugacy invariance, the finite union property and the product formula. However, some of them differ, for example the power formula, the $\sigma$-union property, the variational principle (see~\cite{M1,M2}). In addition, the topological entropy of a system is equal to the topological entropy of the same map, restricted to the non-wandering set. Unlike this, the set of wandering points is visible for the polynomial entropy. For example, Labrousse showed in~\cite{L3} that if a homeomorphism of a compact metric space possesses a wandering point, then its polynomial entropy is greater or equal to $1$ (see Proposition $2.1$ in~\cite{L3}). Moreover, as  Hauseux and Le Roux pointed out in~\cite{HL}, the polynomial entropy is particularly well adapted to the wandering part, since the growth of wandering orbits is always polynomial (see the remarks after Proposition 2.3 in~\cite{HL}).

In~\cite{DK}, the authors computed the polynomial entropy of the induced maps $F_n(f)$ and $C(f)$ when $f$ is a homeomorphism of a circle or an interval with finitely many non-wandering points. Those results heavily rely on the coding techniques developed in~\cite{HL} and slightly generalized in~\cite{KP}, which apply only to the case when the non-wandering set is finite (so every non-wandering point is actually periodic). The condition regarding the finite number of non-wandering points comes naturally, because of the aforementioned growth of wandering orbits, which is polynomial.

In the present paper, we prove that the polynomial entropy of the dynamical system $(\mathcal{S}F_n(X),\mathcal{S}F_n(f))$ is equal to the polynomial entropy of $(F_n(X),F_n(f))$, and that they are both equal $n h_{pol}(f,X)$, under the assumptions that $X$ is compact and $f:X\to X$ is a homeomorphism with a finite chain recurrent set. Since it is needed that $F_n(f)$ has a finite non-wandering set in order to apply the aforementioned method, the assumptions for map $f$ have to be a little stronger than $NW(f)$ being finite. One sufficient condition is that $\mathcal{CR}(f)$ is finite. If $X$ is, for example, a regular curve, we have that actually $\mathcal{CR}(f)=NW(f)$ (see \cite{N1}). It is worth mentioning that dendrites and graphs are regular curves.

Comparing to the results obtained in~\cite{DK} for $(F_n(f),F_n(X))$, we generalise them for an arbitrary homeomorphism on a compact $X$ with a finite chain recurrent set and complement them with results regarding the suspension (and subsequently the generalised suspension).

\begin{theorem-non}
Let $X$ be a compact space, $f:X\to X$ a homeomorphism with a finite chain recurrent set and $n\geqslant2$. Then $$h_{pol}(\mathcal{S}F_n(f),\mathcal{S}F_n(X))=h_{pol}(F_n(f),F_n(X))=n h_{pol}(f,X).$$
\end{theorem-non}
Furthermore, in contrast to the condition of a finite chain recurrent set, we turn our attention to homeomorphisms with at least one wandering point. We obtain a similar result to that in \cite{DKL}:
\begin{theorem-non}
Let $X$ be a compact space and $f:X\to X$ be a homeomorphism with as least one wandering point $x_0$. If the sets $\alpha_{f}(x_0)$ and $\omega_{f}(x_0)$ are finite, then $h_{pol}(\mathcal{S}F_n(f))\geqslant n$, for all $n\geqslant2$.
\end{theorem-non}
Finally, we apply all the results to the setting when $X=[0,1]$ and $X=\mathbb{S}^1$.

\section{Preliminaries}

\subsection{Polynomial entropy and coding}\label{subsec:entropy}

Suppose that $X$ is a compact metric space, and $f:X\rightarrow X$ is continuous. Denote by $d_n^f(x,y)$ the dynamic metric (induced by $f$ and $d$):
$$
d_n^f(x,y)=\max\limits_{0\leqslant k\leqslant n-1}d(f^k(x),f^k(y)).
$$
Fix $Y\subseteq X$. For $\varepsilon>0$, we say that a finite set $E\subset X$ is \textit{$(n,\varepsilon)$-separated} if for every $x,y \in E$ it holds $d_n^f(x,y)\geqslant \varepsilon$. Let $S(n,\varepsilon;Y)$ denote the maximal cardinality of an $(n,\varepsilon)$-separated set $E$, contained in $Y$.

\begin{def}\label{def:pol_ent} The \textit{polynomial entropy} of the map $f$ on the set $Y$ is defined by
$$
h_{pol}(f;Y)=\lim\limits_{\varepsilon \rightarrow 0}\limsup\limits_{n\rightarrow \infty}\frac{\log S(n,\varepsilon;Y)}{\log n}.
$$
\end{def}
If $X=Y$, we abbreviate $h_{pol}(f):=h_{pol}(f;X)$.
The polynomial entropy, as well as the topological entropy, can also be defined via coverings with sets of $d^n_f$-diameters less than $\varepsilon$, or via coverings by balls of $d_n^f$-radius less than $\varepsilon$, see p.\ $626$ in $\cite{M2}$. We list some properties of the polynomial entropy that are important for our computations (for the proofs see Propositions $1-4$ in \cite{M2}).
\begin{itemize}
\item[(1)] $h_{pol}(f^k)=h_{pol}(f)$, for any $k\geqslant 1$.
\item[(2)] If $Y\subset X$ is a closed, $f$-invariant set, then $h_{pol}(f;Y)=h_{pol}(f|_Y)$.
\item[(3)] If $Y=\bigcup_{j=1}^mY_j$ where $Y_j$ are $f$-invariant, then $h_{pol}(f;Y)=\max\{h_{pol}(f;Y_j)\mid j=1,\ldots,m\}$.\label{finite-union}
\item[(4)] If $f:X\to X$, $g:Y\to Y$ and $f\times g:X\times Y\to X\times Y$ is defined as $(f\times g) (x,y):=(f(x),g(y))$, then $h_{pol}(f\times g)=h_{pol}(f)+h_{pol}(g)$.
\item[(5)] $h_{pol}(f)$ does not depend on a metric but only on the induced topology.
\item[(6)] $h_{pol}(\cdot)$ is a \textit{conjugacy invariant} (meaning if $f:X\to X$, $g:X'\to X'$, $\varphi:X\to X'$ is a homeomorphism of compact spaces and $g\circ\varphi=\varphi\circ f$, then $h_{pol}(f)=h_{pol}(g)$).
\item[(7)] If $f:X\to X$ and $g:X'\to X'$ are \textit{semi-conjugated}, meaning that $\varphi:X\to X'$ is a continuous surjective map of compact spaces and $g\circ\varphi=\varphi\circ f$, then $h_{pol}(f)\geqslant h_{pol}(g)$.
\end{itemize}

For $x\in X$, $\omega$-{\it limit set} is defined as:
$$\omega_f(x):=\left\{y\in X\mid \exists \;\mbox {a strictly increasing sequence}\;\mathbb{N}\ni n_k\to\infty\;\,f^{n_k}(x)\to y\right\},$$
and, for a reversible map $f$, $\alpha$-{\it limit set} is defined as:
$$\alpha_f(x):=\left\{y\in X\mid \exists \;\mbox {a strictly increasing sequence}\;\mathbb{N}\ni n_k\to\infty\;\,f^{-n_k}(x)\to y\right\}.$$
If $X$ is compact, the set $\omega_f(x)$ is nonempty, closed and $f$-invariant. The same is true for $\alpha(x)$, when $f$ is reversible.

A point $x\in X$ is \textit{non-wandering} if for every neighborhood $U$ of $x$ there is $n\geqslant1$ such that $f^n(U)\cap U$ is nonempty. We denote the set of all non-wandering points by $NW(f)$. The set $NW(f)$ is closed and $f$-invariant. Also, we denote the set of all fixed points by $Fix(f)$.

We now give a brief description of the coding procedure used for computing the polynomial entropy for maps with a finite non-wandering set. These results were first obtained in~\cite{HL} for homeomorphisms with only one non-wandering (hence fixed) point, but were later generalised in~\cite{KP} for continuous maps with finitely many non-wandering points.\\

Let $Y$ be any $f$-invariant subset of $X$. We first define a coding relative to a family of sets $\mathcal{F}$.  Let
$$\mathcal{F}=\{Y_1,Y_2,\ldots,Y_L\}$$ where $Y_j\subseteq X\setminus\mathrm{NW}(f)$ and
$$Y_{\infty}:=Y\setminus\bigcup_{j=1}^LY_j.$$ Let $\underline{x}=(x_0,\ldots,x_{n-1})$ be a finite sequence of elements in $Y$. We say that a finite sequence $\underline{w}=(w_0,\ldots,w_{n-1})$ of elements in $\mathcal{F}\cup\{Y_\infty\}$ is a {\it coding} of $\underline{x}$ relative to $\mathcal{F}$ if $x_j\in w_j$, for every $j=0,\ldots,n-1$. We will refer to $\underline{w}$ as a word and to $w_j$ as a letter.

Let $\mathcal{A}_n(\mathcal{F};Y)$ be the set of all codings of all orbits
$$(x,f(x),\ldots,f^{n-1}(x))$$ of length $n$ relative to $\mathcal{F}$, for all $x\in Y$. If
$\sharp \mathcal{A}_n(\mathcal{F};Y)$ denotes the cardinality of $\mathcal{A}_n(\mathcal{F};Y)$, we define the {\it polynomial entropy of $f$, on the set $Y$, relative to the family} $\mathcal{F}$ as the number:
$$h_{pol}(f,\mathcal{F};Y):=\limsup_{n\to\infty}\frac{\log \sharp \mathcal{A}_n(\mathcal{F};Y)}{\log n}.$$

We abbreviate $h_{pol}(\mathcal{F};Y):= h_{pol}(f,\mathcal{F};Y)$ whenever there is no risk of confusion. When we are dealing with a noncompact $Y$, we can relate the polynomial entropy $h_{pol}(f;Y)$ and the polynomial entropy relative to a one point family $\mathcal{F}=\{K\cap Y\}$, for all compact $K\subseteq X$:

\begin{prop}\label{prop1}{\rm [Proposition 3.2 in \cite{KP}]} Let $Y\subseteq X$ be an $f$-invariant set containing exactly one non-wandering point. Then it holds:
$$h_{pol}(f;Y)=\sup\{h_{pol}(\{K\cap Y\};Y)\mid{K\subseteq X\setminus\operatorname{NW}(f),\,K \,\mathrm{compact}}\}.$$\qed
\end{prop}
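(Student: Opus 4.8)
The plan is to establish the two inequalities $\sup_{K}h_{pol}(\{K\cap Y\};Y)\leq h_{pol}(f;Y)$ and $h_{pol}(f;Y)\leq\sup_{K}h_{pol}(\{K\cap Y\};Y)$ separately, where $\sup_K$ ranges over compact $K\subseteq X\setminus\operatorname{NW}(f)$; in both cases I would compare the number of realized codings $\sharp\mathcal{A}_n(\{K\cap Y\};Y)$ with the separation numbers $S(n,\varepsilon;Y)$ up to a multiplicative constant, which is invisible to the $\log/\log n$ scaling. The engine of the whole argument is a finiteness lemma: if $K\subseteq X$ is compact with $K\cap\operatorname{NW}(f)=\emptyset$, then there is an integer $r=r(K)$ with $\sharp\{j:f^j(x)\in K\}\leq r$ for every $x\in X$. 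I would prove it by covering $K$ with finitely many open sets $U_1,\dots,U_r$, each wandering in the strong sense $f^k(U_i)\cap U_i=\emptyset$ for all $k\geq1$ (possible since every point of $K$ is wandering and $K$ is compact); each $U_i$ can then be met by a single orbit at most once, for otherwise, with $j<j'$, the point $f^{j'}(x)$ would lie in $U_i\cap f^{\,j'-j}(U_i)$, contradicting the wandering condition. In particular every coding relative to $\{K\cap Y\}$ is a word with at most $r$ letters equal to $K\cap Y$, so $\sharp\mathcal{A}_n(\{K\cap Y\};Y)=O(n^{r})$ and all relative entropies in sight are finite.

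For the lower bound $\sup_K\leq h_{pol}(f;Y)$ I would fix a compact $K\subseteq X\setminus\operatorname{NW}(f)$ and a small $\varepsilon>0$ with $3\varepsilon<d(K,\operatorname{NW}(f))$, and take a maximal $(n,\varepsilon)$-separated set $E\subseteq Y$, so that every length-$n$ orbit in $Y$ is $d_n^f$-within $\varepsilon$ of some $e\in E$. The key point is that the $K$-coding of an orbit shadowing a fixed $e$ is almost determined by $e$: at a time $j$ for which $f^j(e)$ is $2\varepsilon$-deep inside $K$ (resp. $2\varepsilon$-far from $K$) the corresponding letter is forced to be $K\cap Y$ (resp. $Y_\infty$), and free choices occur only at the \emph{ambiguous} times $j$ with $f^j(e)$ in the compact collar $A=\{z:d(z,K)\leq2\varepsilon,\ d(z,X\setminus K)\leq2\varepsilon\}$. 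Since $A$ is again disjoint from $\operatorname{NW}(f)$, the lemma bounds the number of ambiguous times along the orbit of $e$ by $r(A)$, so at most $2^{r(A)}$ distinct codings shadow any given $e$. Hence $\sharp\mathcal{A}_n(\{K\cap Y\};Y)\leq 2^{r(A)}\,\sharp E=2^{r(A)}S(n,\varepsilon;Y)$, and passing to $\limsup_n(\log\,\cdot)/\log n$ gives $h_{pol}(\{K\cap Y\};Y)\leq h_{pol}(f;Y)$ for every such $K$. Notably, this half uses nothing about the number of non-wandering points.

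For the upper bound I would fix $\varepsilon>0$ and bring in the hypothesis that $Y$ contains a single non-wandering point $p$: since for $x\in Y$ the set $\omega_f(x)$ (and $\alpha_f(x)$ when $f$ is invertible) is nonempty, $f$-invariant and contained in $\operatorname{NW}(f)$, every orbit of $Y$ accumulates only on $p$, so the set of $Y$-orbit points at distance $\geq\varepsilon$ from $p$ lies in a compact $K_\varepsilon$ disjoint from $\operatorname{NW}(f)$; equivalently, whenever a $Y$-orbit point lies outside $K_\varepsilon$ it lies within $\varepsilon$ of $p$. Covering $K_\varepsilon$ by $N$ balls of radius $\varepsilon$ gives a finite family $\mathcal{G}$ refining $\{K_\varepsilon\cap Y\}$, and two orbits with the same $\mathcal{G}$-coding are $d_n^f$-$2\varepsilon$-close: at each time they either lie in a common ball of radius $\varepsilon$ or both within $\varepsilon$ of $p$. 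Consequently $S(n,2\varepsilon;Y)$ is at most the number of $\mathcal{G}$-codings, which by the lemma (each word has at most $r(K_\varepsilon)$ non-$Y_\infty$ letters, each carrying one of $N$ refinements) is at most $N^{\,r(K_\varepsilon)}\,\sharp\mathcal{A}_n(\{K_\varepsilon\cap Y\};Y)$. Taking $\limsup_n(\log\,\cdot)/\log n$ and then letting $\varepsilon\to0$ yields $h_{pol}(f;Y)\leq\sup_K h_{pol}(\{K\cap Y\};Y)$, completing the proof.

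I expect the main obstacle to be precisely the structural input invoked in the last paragraph: converting the hypothesis ``exactly one non-wandering point in $Y$'' into the usable statement that outside a compact set disjoint from $\operatorname{NW}(f)$ a $Y$-orbit is genuinely near $p$. This requires controlling how $Y$-orbits may approach non-wandering points of $X$ that do not belong to $Y$, and it is here that the single-non-wandering-point assumption is indispensable: with two or more accumulation points, ``being outside a wandering compact set'' no longer forces two orbits to be close, and the clean comparison between separation numbers and codings breaks down.
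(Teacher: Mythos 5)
The paper itself does not prove Proposition~\ref{prop1}: it is quoted from \cite{KP} (Proposition~3.2 there) and used as a black box, so there is no in-paper argument to measure yours against; I therefore assess your proof on its own terms. Your finiteness lemma (every orbit meets a compact $K\subseteq X\setminus\operatorname{NW}(f)$ at most $r(K)$ times) is correct and is indeed the standard engine behind this circle of results, and your proof of the inequality $h_{pol}(\{K\cap Y\};Y)\leqslant h_{pol}(f;Y)$ is sound: the collar $A$ is compact and disjoint from $\operatorname{NW}(f)$ because $3\varepsilon<d(K,\operatorname{NW}(f))$, so the bound of $2^{r(A)}$ codings per separated point is legitimate, and, as you observe, this half uses nothing about $\operatorname{NW}(f)\cap Y$.

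The upper bound, however, has a genuine gap, and it sits exactly where you predicted. You claim that the $Y$-orbit points at distance $\geqslant\varepsilon$ from $p$ lie in a compact $K_\varepsilon$ disjoint from $\operatorname{NW}(f)$, deducing this from $\omega_f(x)\subseteq\operatorname{NW}(f)$ and $Y\cap\operatorname{NW}(f)=\{p\}$. But $\omega_f(x)$ is only contained in $\overline{Y}\cap\operatorname{NW}(f)$, not in $Y\cap\operatorname{NW}(f)$, and the proposition does not assume $Y$ closed. If $\overline{Y}$ meets $\operatorname{NW}(f)$ at some $q\neq p$, then the closure of $\{y\in Y:\,d(y,p)\geqslant\varepsilon\}$ contains $q$; since any compact superset of a set contains its closure, no admissible $K_\varepsilon$ exists, and two orbits can both carry the letter $Y_\infty$ at time $j$ while sitting near two \emph{different} non-wandering points, so ``same coding'' no longer forces $2\varepsilon$-closeness. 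This is not a pathology: it is precisely the situation in which this paper applies the proposition ($Y=G_n$, $\pi^{-1}(G_n)$, $q(G_n)$, whose orbits converge to fixed points of the induced map lying \emph{outside} these sets, namely tuples with repeated coordinates). To repair the argument you need an extra device. For instance, assuming $\operatorname{NW}(f)$ finite (the ambient hypothesis in \cite{KP}), take $K_\varepsilon=\{x:\,d(x,\operatorname{NW}(f))\geqslant\delta\}$ and note that for $\delta$ small an orbit cannot pass from the $\delta$-neighbourhood of one fixed point to that of another without visiting $K_\varepsilon$; hence the missing information ``which non-wandering point the orbit is currently near'' is constant on each of at most $r(K_\varepsilon)+1$ blocks and costs only a factor $\bigl(\sharp\operatorname{NW}(f)\bigr)^{r(K_\varepsilon)+1}$, invisible at the $\log/\log n$ scale. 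Without something of this kind the inequality $h_{pol}(f;Y)\leqslant\sup_K h_{pol}(\{K\cap Y\};Y)$ is not established.
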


The polynomial entropy is a conjugacy invariant on compact spaces, but we can generalize that property in some instances for non-compact spaces, too. We used this generalization implicitly in the proof of the main Theorem A in \cite{DK}. The previous proposition will be used in order to prove the following result:

\begin{prop}\label{prop2} Let $f_1:X_1\to X_1$ and $f_2:X_2\to X_2$ be homeomorphisms on compact metric spaces $X_1$ and $X_2$. Let $A\subset X_1$ and $B\subset X_2$ be invariant subsets that contain exactly one non-wandering point of the map $f_1$ and $f_2$, respectively. If there exists a homeomorphism $H:A\to B$, then $h_{pol}(f_1;A)=h_{pol}(f_2;B)$.
\end{prop}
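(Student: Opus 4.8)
The plan is to reduce everything to the metric-free coding (itinerary) description of Proposition~\ref{prop1} and then to argue that the resulting quantity is a topological invariant of $A$. First I would record the elementary fact that the unique non-wandering point $p\in A$ of $f_1$ is fixed: since both $\mathrm{NW}(f_1)$ and $A$ are $f_1$-invariant, the orbit of $p$ stays in $A\cap\mathrm{NW}(f_1)=\{p\}$, so $f_1(p)=p$; the same yields a fixed point $q\in B$ for $f_2$. Applying Proposition~\ref{prop1} to $A$ and to $B$,
$$h_{pol}(f_1;A)=\sup_K h_{pol}(\{K\cap A\};A),\qquad h_{pol}(f_2;B)=\sup_{K'} h_{pol}(\{K'\cap B\};B),$$
the suprema running over compact $K\subseteq X_1\setminus\mathrm{NW}(f_1)$ and $K'\subseteq X_2\setminus\mathrm{NW}(f_2)$. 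The essential gain is that each summand is defined purely through the cardinalities $\sharp\mathcal{A}_n(\{K\cap A\};A)$ of sets of codings, with no reference to any metric.

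Next I would make the coding count combinatorially explicit and exploit its properness. For a compact $K$ disjoint from $\mathrm{NW}(f_1)$, every $f_1$-orbit in $A$ meets $K$ only finitely often: an infinite sequence of forward visit times would, by compactness of $K$, accumulate at a non-wandering point lying in $K$, contradicting $K\cap\mathrm{NW}(f_1)=\emptyset$; the same holds in backward time since $f_1$ is a homeomorphism. Hence the coding of $(x,\dots,f_1^{\,n-1}(x))$ relative to $\{K\cap A\}$ is just the finitely supported indicator of the hitting times $\{0\le j\le n-1: f_1^{\,j}(x)\in K\}$, and $\sharp\mathcal{A}_n(\{K\cap A\};A)$ equals the number of distinct such length-$n$ hitting patterns, i.e. the number of nonempty atoms of the partition of $A$ generated by $K,f_1^{-1}K,\dots,f_1^{-(n-1)}K$. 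This description is intrinsic to the pair $(A,f_1)$ and survives transport by $H$: writing $g:=H^{-1}\circ f_2\circ H$, the homeomorphism $H$ carries $f_2$-codings in $B$ to $g$-codings in $A$ letter by letter and so identifies the coding counts on the two sides, where $g$ is again a homeomorphism of $A$ whose only non-wandering point $H^{-1}(q)$ is fixed. Thus the statement is equivalent to the assertion that two single-fixed-point wandering homeomorphisms $f_1$ and $g$ of the \emph{same} space $A$ have the same supremal coding-growth.

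The heart of the matter, and the step I expect to be the main obstacle, is precisely this last assertion: that $\sup_K h_{pol}(\{K\cap A\};A)$ depends only on the topology of $A$ and not on the particular wandering dynamics. One cannot compare the two systems term by term, because $H$ is not a conjugacy and the hitting-pattern complexity of a fixed compactum genuinely depends on the map; a homeomorphism of the circle illustrates that the marked fixed point need not even be preserved, yet the entropies must still agree. The route I would take is to show that the supremum of the polynomial growth rates of the atom-counts above is governed by the way $A$ is exhausted by compacta of $A\setminus\{p\}$ as they accumulate at $p$ — concretely, through the structure of the orbit space $(A\setminus\{p\})/f_1$, which is Hausdorff because the $\mathbb{Z}$-action is free (the only periodic point is $p$) and proper (by the finiteness of visits just established). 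The plan is then to prove that any two free proper $\mathbb{Z}$-actions on $A\setminus\{p\}$ with topologically matching accumulation behaviour at $p$ produce atom-counts of the same polynomial order, thereby identifying $h_{pol}$ with an intrinsic topological invariant of $A$ that $H$ automatically preserves. Establishing this dynamics-independence — decoupling the combinatorial growth of the hitting patterns from the specific proper action, while controlling both the upper bound (finitely many visit-shapes per compactum) and the realisability of the maximal growth from below — is the genuinely hard part, whereas the reductions above are routine once Proposition~\ref{prop1} is in hand.
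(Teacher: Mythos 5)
Your first two reductions are sound and in fact coincide with the mechanism of the paper's own proof: the unique non-wandering point of $A$ is fixed, Proposition~\ref{prop1} replaces the metric entropy by the supremum of coding entropies over compacta $K\subseteq X_1\setminus\mathrm{NW}(f_1)$, and $H$ transports words letter by letter. But here you and the paper part ways on what the hypothesis means. The paper tacitly uses (and in both of its applications has: $H=q$ satisfies $q\circ F_n(f)=\mathcal{S}F_n(f)\circ q$, and $H=\pi$ satisfies $\pi\circ f^{\times n}=F_n(f)\circ\pi$) that $H$ \emph{intertwines the dynamics}, $H\circ f_1|_A=f_2|_B\circ H$; the proposition is introduced there precisely as an extension of conjugacy invariance to non-compact invariant sets. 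With that hypothesis, the word $(H(w_0),\dots,H(w_{n-1}))$ is a coding of a genuine $f_2$-orbit, $\sharp\mathcal{A}_n(\{K\cap A\};A)=\sharp\mathcal{A}_n(\{H(K\cap A)\};B)$, and Proposition~\ref{prop1} finishes the proof --- exactly the content of your middle paragraph, which would already be a complete argument. You instead read $H$ as a bare homeomorphism, correctly observed that transport then only produces codings of $g=H^{-1}\circ f_2\circ H$, and set out to prove that the supremal coding growth is a topological invariant of $A$ alone.

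That last step is the genuine gap, in two ways. First, it is not proved: your final paragraph is explicitly a plan (``The plan is then to prove\dots''), not an argument. Second, and decisively, no argument can complete it, because the literal, non-equivariant statement is false. The extension of a Brouwer homeomorphism to $\mathbb{S}^2$ is a homeomorphism of a compact space with exactly one non-wandering point (the point at infinity), and by \cite{HL} the polynomial entropy of such extensions is a genuinely dynamical quantity: it equals $1$ for the translation, while every value in $[2,\infty]$ is realized by suitable Brouwer homeomorphisms. Taking $X_1=X_2=A=B=\mathbb{S}^2$ and $H=\mathrm{id}$ satisfies all of your hypotheses and violates the conclusion, so the ``dynamics-independence'' you aim for cannot hold. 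Your proposed route also contains a concrete technical misstep: the per-orbit finiteness of visits to a compact $K\subseteq A\setminus\mathrm{NW}(f_1)$ (which is correct, since $\omega$- and $\alpha$-limit points are non-wandering) does \emph{not} make the $\mathbb{Z}$-action on $A\setminus\{p\}$ proper; properness requires $\{m\in\mathbb{Z}\mid f_1^m(K)\cap K\neq\emptyset\}$ to be finite for every compactum $K$, and this fails already for Reeb's Brouwer homeomorphism, whose orbit space is non-Hausdorff --- indeed this non-properness is exactly the source of the large entropy values in \cite{HL}, so an orbit-space invariant is structurally blind to the quantity you need. The repair is simply to add the intertwining hypothesis $H\circ f_1|_A=f_2|_B\circ H$, after which your coding-transport paragraph completes the proof along the same lines as the paper.
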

\begin{proof}
Note that the homeomorphism $H$ induces a bijection between $$\{K\cap A\mid K\subset X_1\setminus NW\left(f_1\right),\;K\;\text{compact}\}$$ and $$\{L\cap B\mid L\subset X_2\setminus NW\left(f_2\right),\;L\;\text{compact}\}.$$

If $\underline{w}=(w_0,\ldots,w_{n-1})$ is a coding of an orbit $\left(x,f_1(x),\ldots,(f_1)^{n-1}(x)\right)$ in
$A$ (consisting of letters $K\cap A$ and $Y_\infty:=A\setminus K$), then $(H(w_0),\ldots,H(w_{n-1}))$ is the coding of an orbit $\left(H(x),f_2(H(x)),\ldots,(f_2)^{n-1}(H(x))\right)$ in $B$ (consisting of letters $H(K\cap A)$ and $Y_\infty:=X_2\setminus H(K\cap A)$), and vice versa. Therefore, for a fixed compact $K$, the sets $\mathcal{A}_n\left(\{K\cap A\};A\right)$ and $\mathcal{A}_n\left(\{H(K\cap A)\};B\right)$ have the same cardinality. We finish the proof by applying Proposition \ref{prop1}.
\end{proof}

\subsection{Hyperspaces and induced maps}

For a compact metric space $(X,d)$, the hyperspace $2^X$ is the set of all nonempty closed subsets of $X$. The topology on $2^X$ (namely, the Vietoris topology) is induced by the Hausdorff metric
$$d_H(A,B):=\inf\{\varepsilon>0\mid A\subset U_\varepsilon(B),\;B\subset U_\varepsilon(A)\},$$
where
\begin{equation}\label{eq:neighb}
U_\varepsilon(A):=\{x\in X\mid d(x,A)<\varepsilon\}.
\end{equation}
We consider a closed subspace $F_n(X)$ of $2^X$, $n\geqslant1$, consisting of all finite nonempty subsets of cardinality at most $n$, with the induced metric. The set $F_n(X)$ is called the \textit{$n$-fold symmetric product of $X$}. Both spaces $2^X$ and $F_n(X)$ are compact metric spaces with respect to the Hausdorff metric.

If $f:X\to X$ is continuous, then it induces continuous maps
 $2^f:2^X\to 2^X$ and $F_n(f):F_n(X)\to F_n(X)$, $F_n(f)=2^f\mid_{F_n(X)}$ by:
 $$2^f(A):=\{f(x)\mid x\in A\}.$$
 If $f$ is a homeomorphism, then this is also true for both $2^f$ and $F_n(f)$. Because of the following commutative diagram, it is clear that $(F_n(f),F_n(X))$ is a factor of $(f^{\times n},X^{\times n})$, so $h_{pol}(F_n(f),F_n(X))\leqslant h_{pol}(f^{\times n},X^{\times n})=n h_{pol}(f,X)$.
 \[ \begin{tikzcd}
X^{\times n} \arrow{r}{f^{\times n}} \arrow[swap]{d}{\pi} & X^{\times n} \arrow{d}{\pi} \\%
F_n(X)\arrow{r}{F_n(f)}& F_n(X)
\end{tikzcd}
\]
Here, $X^{\times n}=\underbrace{X\times\dots\times X}_{n}$, $f^{\times n}(x_1,\ldots,x_n)=(f(x_1),\ldots,f(x_n))$ and\\ $\pi(x_1,\ldots,x_n)=\{x_1,\ldots,x_n\},$ which is a continuous surjective map.
\begin{exam}
If $X=[0,1]$, then $F_2(X)$ can be identified with the triangle whose vertices are $(0,0),(0,1),(1,1)$ and $F_1(X)$ is homeomorphic to the line segment joining $(0,0)$ and $(1,1)$. If $X=\mathbb{S}^1$, then $F_2(X)$ is homeomorphic to the M$\ddot{o}$bius band and $F_1(X)$ is homeomorphic to the manifold boundary of the M$\ddot{o}$bius band.
\end{exam}

\subsection{Suspensions} Let $X$ be a compact metric space and $n\in\mathbb{N}$, $n\geqslant2$.
\begin{defn}
The $n$-fold symmetric product suspension of the continuum $X$ is the quotient space $$\mathcal{S}F_n(X)=F_n(X)/F_1(X),$$ with the quotient topology.
\end{defn}
We denote the quotient map by $q:F_n(X)\to\mathcal{S}F_n(X)$ and $q(F_1(X))=F_{X}$. Let us remark that $$\mathcal{S}F_n(X)=\{\{A\}\mid A\in F_n(X)\setminus F_1(X)\}\cup\{F_X\},$$ and that using the appropriate restriction of $q$, we have that $\mathcal{S}F_n(X)\setminus\{F_X\}$ is homeomorphic to $F_n(X)\setminus F_1(X)$. It is a well known fact that $\mathcal{S}F_n(X)$ is also a compact metric space (see $3.10$ in \cite{SN}). 

Let us now define the induced map of $f$ on the $n$-fold symmetric product suspension $\mathcal{S}F_n(f):\mathcal{S}F_n(X)\to\mathcal{S}F_n(X)$, by:
\begin{equation}
    \mathcal{S}F_n(f)(\mathcal{A}) = 
    \left\{
        \begin{array}{ll}
            q(F_n(f)(q^{-1}(\mathcal{A}))), & \text{if } \mathcal{A} \neq F_{X}; \\
            F_X, & \text{if } \mathcal{A}=F_X.
        \end{array}
        \right. 
\end{equation}

If $f:X\to X$ is continuous, then $\mathcal{S}F_n(f)$ is also a continuous map (see p. 126 in \cite{JD}) and the following diagram commutes.
\[ \begin{tikzcd}
F_n(X) \arrow{r}{F_n(f)} \arrow[swap]{d}{q} & F_n(X) \arrow{d}{q} \\%
\mathcal{S}F_n(X)\arrow{r}{\mathcal{S}F_n(f)}& \mathcal{S}F_n(X)
\end{tikzcd}
\]

If $f:X\to X$ is a homeomorphism, then so is $\mathcal{S}F_n(f)$. Namely, since $q$ is injective on $F_n(X)\setminus F_1(X)$, and $\mathcal{S}F_n(f)^{-1}(F_X)=F_1$, we conclude that $\mathcal{S}F_n(f)$ is injective. Since $q$ is surjective, $\mathcal{S}F_n(f)$ is a continuous bijective map from a compact space to a Hausdorff space, which is known to be a homeomorphism. At this point we already see that $(\mathcal{S}F_n(f),\mathcal{S}F_n(X))$ is a factor of $(F_n(f),F_n(X))$, so $h_{pol}(\mathcal{S}F_n(f))\leqslant h_{pol}(F_n(f))$. Our goal is to prove $h_{pol}(\mathcal{S}F_n(f))\geqslant h_{pol}(F_n(f))$.

\begin{exam}
If $X=[0,1]$, then $SF_2(X)$ is obtained by collapsing  $F_1(X)$ to a point, and this space is again homeomorphic to a triangle. If $X=\mathbb{S}^1$, then $SF_2(X)$ is homeomorphic to the quotient space of M$\ddot{o}$bius band, with its boundary collapsed, which is known as a model for the real projective space $\mathbb{R}P^2$.
\end{exam}

\section{Finiteness of $NW(F_n(f))$}

As we already mentioned, in order to apply the method described in~\cite{HL} and in~\cite{KP}, the non-wandering set of $F_n(f)$ has to be finite. As we can see from Examples 14 and 15 in \cite{RIM}, the non-wandering set of the induced map $F_n(f)$ is not directly connected with the non-wandering set of $f$, as one might think. One way to achieve finiteness of $NW(F_n(f))$ is that we assume that $\mathcal{CR}(f)$ is finite, as we will now prove. Let us start with an auxiliary lemma.

\begin{lem}\label{lema}
Let $X$ be a compact metric space and $f:X\to X$ a continuous map. Let $A,B\in F_N(X)$, $A=\{a_1,\ldots,a_k\}$ and $B=\{b_1,\ldots,b_m\}$, $1\leqslant m,k\leqslant N$. There exists $\varepsilon_0 $ such that for every $\varepsilon<\varepsilon_0$ the following is true: if $d_H(A,B)<\varepsilon$, then there exist $N-$tuples $(\alpha_1,\ldots,\alpha_N)\in\pi^{-1}(A)$ and $(\beta_1,\ldots,\beta_N)\in\pi^{-1}(B)$ such that $d((\alpha_1,\ldots,\alpha_N),(\beta_1,\ldots,\beta_N))<\varepsilon$.
\end{lem}

\begin{proof}

Let $\delta:=\min\limits_{i\neq j}{d(a_i,a_j)}$, $\varepsilon_0:=\delta/2$, $\varepsilon<\varepsilon_0$ and suppose that $d_H(A,B)<\varepsilon$. Then, from the definition of Hausdorff metric, we have that
$$(\forall i\in\{1,\ldots,k\})(\exists j_i\in\{1,\ldots,m\})\ d(a_i,b_{j_i})<\varepsilon.$$

Note that for $i\neq l$, we have that $b_{j_i}\neq b_{j_l}$. Indeed,

\begin{align*}
\delta\leqslant d(a_i,a_l)&\leqslant d(a_i,b_{j_i})+d(b_{j_i,b_{j_l}})+d(b_{j_l},a_l)\\
&<2\varepsilon+d(b_{j_i},b_{j_l})<\delta+d(b_{j_i},b_{j_l}).
\end{align*}

Therefore $d(b_{j_i},b_{j_l})>0$, and consequently $b_{j_i}\neq b_{j_l}$. We conclude that it has to be $m\geqslant k$.\\

We can now explain how we define $(\alpha_1,\ldots,\alpha_N)\in\pi^{-1}(A)$ and $(\beta_1,\ldots,\beta_N)\in\pi^{-1}(B)$. Firstly, $(\alpha_1,\ldots,\alpha_k)=(a_1,\ldots,a_k)$ and $(\beta_1,\ldots,\beta_k)=(b_{j_1},\ldots,b_{j_k})$. Then let $\{\beta_{j_k+1},\ldots,\beta_{m}\}$ be the elements from $\{b_1,\ldots,b_m\}\setminus\{b_{j_1},\ldots,b_{j_k}\}$. The corresponding $\{\alpha_{j_k+1},\ldots,\alpha_{m}\}$ can be chosen from $\{a_1,\ldots,a_k\}$, in such way that $d(\alpha_p,\beta_p)<\varepsilon$, for $p\in\{j_k+1,\ldots,m\}$. Indeed, if this weren't the case, then $d_H(A,B)\geqslant\varepsilon$. For the remainder of the pairs of coordinates, if there are any ($m<N$), we can just repeat any of the already used pairs. In this way we obtained $N$-touples which are close, when their projections in $F_N(X)$ are close.

\end{proof}

\begin{thrm}\label{CR}
Let $X$ be a compact metric space and $f:X\to X$ a continuous map such that $\mathcal{CR}(f)=\mathrm{Fix}(f)=\mathrm{NW}(f)$ is a finite set. Then 
$$\mathrm{NW}(F_N)(f)=\mathrm{Fix}(F_N)=\big\{\{x_1,\ldots,x_k\}\mid 1\le k\le N,\,x_j\in\mathrm{Fix}(f)\big\}.$$
\end{thrm}

\begin{proof}

Let $A\in\mathrm{NW}(F_N)(f)$, $A=\{a_1,\ldots,a_k\}$, $k\leqslant N$. If we show that $(a_1,\ldots,a_k)\in\mathcal{CR}(f^{\times k})$, $f^{\times k}:X^{\times k}\to X^{\times k}$, then using the fact that $\mathcal{CR}(f\times g)=\mathcal{CR}(f)\times\mathcal{CR}(g)$, we have that $a_i\in\mathcal{CR}(f)$, $1\leqslant i\leqslant k$. Since $\mathcal{CR}(f)=\mathrm{Fix}(f)$, we have that $A\in\mathrm{Fix}(F_N(f))$. Let us show that $(a_1,\ldots,a_k)\in\mathcal{CR}(f^{\times k})$. More precisely, we will construct an $\varepsilon-$chain starting and ending at $(a_1,\ldots,a_k)$, for an arbitrary $\varepsilon>0$. Note that it is enough to find an $\varepsilon-$chain starting and ending at any permutation of $(a_1,\ldots,a_k)$.\\

Since is $A$ is non-wandering, for every $n\in\mathbb{N}$, choose $U_n:=B(A,1/n)$. There exists $B_n\in F_N(X)$ and $m_n\in\mathbb{N}$ such that
$$d(A,B_n)<1/n,\quad d(A,F_N^{m_n}(f)(B_n))<1/n.$$

We can also choose $(m_n)_{n\in\mathbb{N}}$ to be strictly increasing sequence.

Denote by $\pi^{-1}(A)=\{\mathbf{a}_1,\ldots,\mathbf{a}_p\}\in X^N$, $p\leqslant P$ and by $\pi^{-1}(B_n)=\{\mathbf{b}_1^n,\ldots,\mathbf{b}_{q(n)}^n\}\in X^N$, $q(n)\leqslant Q$. Using Lemma~\ref{lema}, we can conclude that for every $n\in\mathbb{N}$, there exist $i_n,j_n\in\{1,\ldots,p\}$ and $l_n,r_n\in\{1,\ldots,q(n)\}$ such that
 $$d(\mathbf{a}_{i_n},\mathbf{b}_{r_n}^{n})<1/n, \quad d\left(\mathbf{a}_{j_n},{(f^{\times N})}^{m_n}(\mathbf{b}_{l_n}^{n})\right)<1/n.$$
 
Since the set $\{1,\ldots,p\}$ is finite, there exists $i_0\in\{1,\ldots,p\}$ and a subsequence $p_n$ of $\mathbb{N}$ such that
 
 $$d(\mathbf{a}_{i_0},\mathbf{b}_{r_{p_n}}^{p_n})<1/p_n.$$ 

Similarly, there exists $r_0\in\{1,\ldots,Q\}$ and a subsequence $(k_n)$ of $(p_n)$ such that 

 $$d(\mathbf{a}_{i_0},\mathbf{b}_{r_{0}}^{k_n})<1/k_n.$$ 

Now we have  

$$d\left(\mathbf{a}_{j_{k_n}},{(f^{\times N})}^{m_{k_n}}(\mathbf{b}_{l_{k_n}}^{k_n})\right)<1/k_n.$$

By repeating the same reasoning once again, we find a new subsequence of $(k_n)$, denote it with $(s_n)$, and $j_0\in\{1,\ldots,m\}$ such that
$$d\left(\mathbf{a}_{j_0},{(f^{\times N})}^{m_{s_n}}(\mathbf{b}_{l_{s_n}}^{s_n})\right)<1/s_n.$$
 
Furthermore, as before, there exists $l_0\in\{1,\ldots,Q\}$ and a subsequence $(t_n)$ of $(s_n)$ such that

$$d\left(\mathbf{a}_{j_0},{(f^{\times N})}^{m_{t_n}}(\mathbf{b}_{l_0}^{t_n})\right)<1/t_n.$$

To summarize, we have

 $$d(\mathbf{a}_{i_0},\mathbf{b}_{r_{0}}^{t_n})<1/t_n, \quad d\left(\mathbf{a}_{j_0},{(f^{\times N})}^{m_{t_n}}(\mathbf{b}_{l_0}^{t_n})\right)<1/t_n.$$ 
 
If we denote $\mathbf{a}_{i_0}=(\alpha_1,\ldots,\alpha_N)$ and $\mathbf{b}_{r_{0}}^{t_n}=(\beta^{t_n}_1,\ldots,\beta^{t_n}_N)$, then $d(\alpha_j,\beta^{t_n}_j)<1/t_n$, for all $1\leqslant j\leqslant N$. Similarly, if we denote  $\mathbf{a}_{j_0}=(\gamma_1,\ldots,\gamma_N)$ and $\mathbf{b}_{l_0}^{t_n}=(\delta^{t_n}_1,\ldots,\delta^{t_n}_N)$, we have that $d(\gamma_j,f^{m_{t_n}}(\delta^{t_n}_j))<1/t_n$, for all $1\leqslant j\leqslant N$.\\
 
 $N-$tuples $\mathbf{a}_{i_0}$ and $\mathbf{a}_{j_0}$ both have exactly $k$ mutually different coordinates. We discard all the sequences $(\beta^{t_n}_j)$, and alike $f^{m_{t_n}}(\delta^{t_n}_j)$, that have the same limit, and leave just one with the said limit. We are left with $k$ sequences $(\beta^{t_n}_j)$ and $k$ sequences $(f^{m_{t_n}}(\delta^{t_n}_j))$. We will now use simpler notation for the obtained inequalities. We have $d(p_j,c^n_j)<1/t_n$ and $d(r_j,f^{u_n}(d^n_j))<1/t_n$, for all $1\leqslant j\leqslant k$. Note that $A=\{a_1,\ldots,a_k\}=\{p_1,\ldots,p_k\}=\{r_1,\ldots,r_k\}$ and $\{c^n_1,\ldots,c^n_k\}=\{d^n_1,\ldots,d^n_k\}$. As before, denote $\mathbf{p}=(p_1,\ldots,p_k)$, $\mathbf{r}=(r_1,\ldots,r_k)$, $\mathbf{c}^n=(c^n_1,\ldots,c^n_k)$ and $\mathbf{d}^n=(d^n_1,\ldots,d^n_k)$. \\

Let $\varepsilon>0$ and choose $\delta<\varepsilon/2$ such that 
\begin{equation}\label{1}
 d(x,y)<\delta\Rightarrow d(f^{\times k}(x),f^{\times k}(y))<\varepsilon
\end{equation}
and $n_0\in\mathbb{N}$ such that 
$$d(\mathbf{p},\mathbf{c}^n)<\delta, \quad d(\mathbf{r},(f^{\times k})^{u_n}(\mathbf{d}^n))<\varepsilon/2$$
for $n\geqslant n_0$. This also implies that for every $s\in\{0,\ldots,l-1\}$ and $n\geqslant n_0$
\begin{equation}\label{5}
d(\mu^s(\mathbf{r}),(f^{\times k})^{u_n}(\mu^s(\mathbf{d}^n)))<\varepsilon/2, 
\end{equation}
since the distance doesn't change if we permute both $k-$tuples in the same way.

 Let $\nu$ be the permutation in $S_k$ mapping $(c^{n_0}_1,\ldots,c^{n_0}_k)=\mathbf{c}^{n_0}$ to $(d^{n_0}_1,\ldots,d^{n_0}_k)=\mathbf{d}^{n_0}$ and $m$ such that $\nu^m=Id$. Denote $\mathbf{p}_1=\nu(\mathbf{p})$.  Let $\mu$ be the permutation in $S_k$ mapping $\mathbf{p}_1$ to $(r_1,\ldots,r_k)=\mathbf{r}$ and $l$ such that $\mu^l=Id$.\\
 
 We have that 
 \begin{equation}\label{2}
 d(\mathbf{p},\mathbf{c}^{n_0})<\delta\Leftrightarrow d(\nu(\mathbf{p}),\nu(\mathbf{c}^{n_0}))<\delta\Leftrightarrow d(\mathbf{p}_1,\mathbf{d}^{n_0})<\delta.
 \end{equation}

Applying $(\ref{1})$ to $(\ref{2})$, we obtain 
\begin{equation}\label{3}
d(f^{\times k}(\mathbf{p}_1),f^{\times k}(\mathbf{d}^{n_0}))<\varepsilon.
\end{equation}

Furthermore,
\begin{align*}
d(\mu(\mathbf{d}^{n_0}),(f^{\times k})^{u_{n_0}}(\mathbf{d}^{n_0}))
&\leqslant d(\mu(\mathbf{d}^{n_0}),\mathbf{r})+d(\mathbf{r},(f^{\times k})^{u_{n_0}}(\mathbf{d}^{n_0}))\\
&=d(\mathbf{d}^{n_0},\mu^{-1}\mathbf{r})+d(\mathbf{r},(f^{\times k})^{u_{n_0}}(\mathbf{d}^{n_0}))\\
&=d(\mathbf{d}^{n_0},\mathbf{p}_1)+d(\mathbf{r},(f^{\times k})^{u_{n_0}}(\mathbf{d}^{n_0}))\\
&<\varepsilon/2+\varepsilon/2=\varepsilon,
\end{align*}
Therefore, for every $s\in\{0,\ldots,l-1\}$, we have that 
\begin{equation}\label{4}
d(\mu^s(\mathbf{d}^{n_0}),(f^{\times k})^{u_{n_0}}(\mu^{s-1}(\mathbf{d}^{n_0})))<\varepsilon.
\end{equation}

Let us finally construct an $\varepsilon$-chain starting and ending at $\mathbf{p}_1$. We will explain why every first term in each line is preceded by the last term in the previous line in an $\varepsilon$-chain. Elements in each line are trivially elements of an $\varepsilon$-chain. We can go from the first to the second line because of $(\ref{3})$. For the following $l-2$ lines we apply $(\ref{4})$. And, lastly, we use $(\ref{5})$, for $s=l-1$.

$$\mathbf{p}_1,$$
$$f^{\times k}(\mathbf{d}^{n_0}), (f^{\times k})^{2}(\mathbf{d}^{n_0}),\ldots, (f^{\times k})^{u_{n_0}-1}(\mathbf{d}^{n_0}),$$
$$\mu(\mathbf{d}^{n_0}),f^{\times k}(\mu(\mathbf{d}^{n_0})),(f^{\times k})^{2}(\mu(\mathbf{d}^{n_0})),\ldots,(f^{\times k})^{u_{n_0}-1}(\mu(\mathbf{d}^{n_0})),$$
$$\mu^{2}(\mathbf{d}^{n_0}),f^{\times k}(\mu^2(\mathbf{d}^{n_0})),(f^{\times k})^{2}(\mu^2(\mathbf{d}^{n_0})),\ldots,(f^{\times k})^{u_{n_0}-1}(\mu^2(\mathbf{d}^{n_0})),$$
$$\ldots$$
$$\mu^{l-1}(\mathbf{d}^{n_0}),f^{\times k}(\mu^{l-1}(\mathbf{d}^{n_0})),(f^{\times k})^2(\mu^{l-1}(\mathbf{d}^{n_0})),\ldots,(f^{\times k})^{u_{n_0}-1}(\mu^{l-1}(\mathbf{d}^{n_0})),$$
$$\mu^{l-1}(\mathbf{r})=\mathbf{p}_1$$

\end{proof}

\section{Main results and proofs}

Let $f:X\to X$ be a homeomorphism with finitely many chain recurrent points. Since $\mathcal{CR}(f)$ is finite and invariant, we conclude it consists of only periodic points. We can pass from $f$ to $f^k$, for some $k\geqslant1$, so that all the chain recurrent points become fixed. This change does not affect the polynomial entropy, since we have that $h_{pol}(f^k)=h_{pol}(f)$. It is clear that this is also true for the map $F_n(f)$, i.e. $$h_{pol}(F_n(f^k))=h_{pol}(F_n(f)^k)=h_{pol}(F_n(f)).$$

Let $\mathcal{A}\in\mathcal{S}F_n(X)$. Then, since the appropriate restriction of $q$ is a homeomorphism, we have:

\begin{equation}
\begin{aligned}
    (\mathcal{S}F_n(f))^k(\mathcal{A})&=(qF_n(f)q^{-1})^k(\mathcal{A})\\
    &=
    \left\{
        \begin{array}{ll}
            qF_n(f)^kq^{-1}(\mathcal{A}) & \text{if } \mathcal{A} \neq F_{X} \\
            F_X & \text{if } \mathcal{A}=F_X
        \end{array}
        \right.\\
    &=\mathcal{S}F_n(f^k)(\mathcal{A}).
\end{aligned}
\end{equation}

We see that 
\begin{equation*}
\begin{aligned}
h_{pol}((\mathcal{S}F_n(f^k)),\mathcal{S}F_n(X))&=h_{pol}((\mathcal{S}F_n(f)^k),\mathcal{S}F_n(X))\\
&=h_{pol}(\mathcal{S}F_n(f),\mathcal{S}F_n(X)),
\end{aligned}
\end{equation*} 
so we can, from now on, consider homeomorphisms $f$ with $\mathcal{CR}(f)=Fix(f)$.

\begin{lem}
Let $X$ be a compact space and $f:X\to X$ a homeomorphism with a finite chain recurrent set $\mathcal{CR}(f)=Fix(f)$. Then $\mathcal{S}F_n(f))$ has a finite non-wandering set and $NW(\mathcal{S}F_n(f)))=Fix(\mathcal{S}F_n(f)))$, for all $n\geqslant2$.
\end{lem}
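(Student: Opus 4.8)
The plan is to reduce the whole statement to a computation of the non-wandering set of $F_n(f)$ and then transport it to the suspension. Having passed (as in the paragraphs above) to the case $NW(f)=Fix(f)=\{p_1,\dots,p_r\}$, I would first prove the auxiliary equality
$$NW(F_n(f))=\{A\in F_n(X)\mid A\subseteq Fix(f)\},$$
whose right-hand side is finite and each of whose elements is fixed by $F_n(f)$. The inclusion $\supseteq$ is immediate: if $A\subseteq Fix(f)$ then $F_n(f)(A)=A$, so $A$ is a fixed, hence non-wandering, point, and there are only finitely many such $A$ since $Fix(f)$ is finite.

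For the nontrivial inclusion $\subseteq$ I would exploit the factor map $\pi\colon(X^{\times n},f^{\times n})\to(F_n(X),F_n(f))$ from the commutative diagram of Section 2. First I would compute $NW(f^{\times n})=Fix(f)^{\times n}$: each coordinate projection $\pi_i\colon X^{\times n}\to X$ is itself a factor map, factor maps carry non-wandering points to non-wandering points, so $\pi_i(NW(f^{\times n}))\subseteq NW(f)=Fix(f)$; this gives $NW(f^{\times n})\subseteq Fix(f)^{\times n}$, the reverse inclusion being trivial. Next, given $A\in NW(F_n(f))$, I would choose Hausdorff balls $\mathcal V_j\downarrow\{A\}$ and return times $k_j\ge1$ with sets $B_j\in\mathcal V_j$ such that $F_n(f)^{k_j}(B_j)=f^{k_j}(B_j)\in\mathcal V_j$; enumerating $B_j$ and its image through the bijection $f^{k_j}\colon B_j\to f^{k_j}(B_j)$ and passing to a subsequence, compactness of $X^{\times n}$ yields, coordinatewise, \emph{connections} $\beta_\ell\rightsquigarrow\gamma_\ell$ between points $\beta_\ell,\gamma_\ell\in A$, meaning points $c^{\ell}_j\to\beta_\ell$ with $f^{k_j}(c^{\ell}_j)\to\gamma_\ell$. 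Separating the cases ``$k_j$ bounded'' (which forces periodicity, hence a fixed point, since $NW(f)=Fix(f)$) and ``$k_j\to\infty$'' shows that any coordinate with $\beta_\ell=\gamma_\ell$ contributes a non-wandering, hence fixed, point of $A$.

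The main obstacle is precisely the possibility that \emph{no} coordinate is such a self-connection: since $F_n(X)$ records only underlying sets, a set near $A$ may return to $A$ \emph{with its points permuted}, so the connections $\beta_\ell\rightsquigarrow\gamma_\ell$ could assemble into a nontrivial cycle $x_{i_1}\rightsquigarrow x_{i_2}\rightsquigarrow\cdots\rightsquigarrow x_{i_1}$ sharing the common time $k_j$, with each $x_{i_t}$ a priori wandering. Excluding this \emph{swap} is the heart of the argument, and the naive chaining of consecutive connections fails because the iterates $f^{k_j}$ are not equicontinuous as $k_j\to\infty$. I expect this step to be the one that genuinely uses both the finiteness of $NW(f)$ and the fact that $\omega_f(x),\alpha_f(x)\subseteq Fix(f)$ for every $x$: the strategy is to show that a common-time cycle through a wandering point is incompatible with every orbit accumulating only on the finite fixed set (equivalently, that the directed transport multigraph on $A$ must contain a loop), which forces $A\subseteq Fix(f)$.

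Finally I would transfer the conclusion to the suspension. As $f$ is a homeomorphism, $F_n(X)\setminus F_1(X)$ is an open $F_n(f)$-invariant set, and the appropriate restriction of $q$ identifies it homeomorphically and equivariantly with the open $\mathcal SF_n(f)$-invariant set $\mathcal SF_n(X)\setminus\{F_X\}$, while $F_X$ is fixed. On an invariant open set the non-wandering points of the restricted system coincide with those of the ambient system lying in it, and $NW(\cdot)$ is a conjugacy invariant, so the non-wandering points of $\mathcal SF_n(f)$ other than $F_X$ correspond bijectively to the non-wandering points of $F_n(f)$ in $F_n(X)\setminus F_1(X)$. By the computation above these are exactly the finitely many fixed sets $A\subseteq Fix(f)$ with $2\le|A|\le n$; adjoining the fixed point $F_X$ shows that $NW(\mathcal SF_n(f))$ is finite and equals $Fix(\mathcal SF_n(f))$, as claimed.
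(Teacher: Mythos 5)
Your overall architecture is the same as the paper's: reduce everything to the identity $NW(F_n(f))=Fix(F_n(f))=\{A\mid A\subseteq Fix(f)\}$ and then transport it to the suspension through $q$. Your final transfer step is correct and is essentially what the paper does (the paper argues directly: for $\mathcal{A}=q(A)\neq F_X$ non-wandering, it pulls an arbitrary neighbourhood $U$ of $A$ back through $q$, uses $(\mathcal{S}F_n(f))^k\circ q=q\circ (F_n(f))^k$ and injectivity of $q$ off $F_1(X)$ to get $(F_n(f))^k(U)\cap U\neq\emptyset$, hence $A\in NW(F_n(f))$). The difference is in the middle: the paper does \emph{not} prove $NW(F_n(f))=Fix(F_n(f))$ at all — it cites Proposition 5 of \cite{DK} for it — whereas you attempt to prove it from scratch and stop exactly at the hard point.

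That stopping point is a genuine gap. You correctly isolate the obstruction (a set near $A$ can return to $A$ with its points permuted, so the coordinatewise ``connections'' may form a cycle of length $\geqslant 2$ through a priori wandering points, and the non-equicontinuity of $f^{k_j}$ blocks naive chaining), but you only describe a ``strategy'' for excluding it; no argument is given. Moreover, the combinatorial endgame you gesture at does not suffice as stated: showing that the transport digraph on $A$ ``contains a loop'' only produces \emph{one} self-connected (hence fixed) point of $A$, while the conclusion $A\subseteq Fix(f)$ needs every point of $A$ to be fixed. A finite digraph in which every vertex has in-degree and out-degree at least $1$ can still have vertices lying on no cycle (e.g.\ $b\to b$, $c\to c$, $c\to a$, $a\to b$), so even granting a loop somewhere you have not handled the remaining points of $A$. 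The cleanest repair is simply to invoke the statement $NW(F_n(f))=Fix(F_n(f))$ for homeomorphisms with finite $NW(f)=Fix(f)$ as a known result (Proposition 5 of \cite{DK}), as the paper does; if you insist on a self-contained proof, the swap case needs a real argument (e.g.\ lifting to the maps $\sigma\circ f^{\times n}$, $\sigma\in S_n$, and analysing their non-wandering sets), which is precisely the content of that proposition and is not a one-line fix.
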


\begin{proof}
The set $Fix(\mathcal{S}F_n(f)))$ is finite because it is contained in a finite set $(Fix(F_n(X))\setminus\{F_1\})\cup \{F_X\}$. Let $\mathcal{A}\in\mathcal{S}F_n(X)$ such that $\mathcal{A}\in NW(\mathcal{S}F_n(f))\setminus Fix(\mathcal{S}F_n(f))$. Since $\mathcal{A}\neq F_X$, we have that $\mathcal{A}=q(A)$, for some $A$ with more than one element. Let $U$ be a neighbourhood of $A$ in $F_n(X)$. Then $q(U)$ is a neighbourhood of $\mathcal{A}$. Define $\mathcal{U}:=q(U)\setminus{F_X}$. Following that $\mathcal{A}$ in a non-wandering point, there exists $k\geqslant1$ so that $$(\mathcal{S}F_n(f))^k(\mathcal{U})\cap \mathcal{U}\neq\emptyset.$$

Then, following the definition of the map $\mathcal{S}F_n(f)$ and a simple property that $(\mathcal{S}F_n(f))^k\circ q=q\circ(F_n(f))^k$:
$$\emptyset\neq(\mathcal{S}F_n(f))^k(\mathcal{U})\cap \mathcal{U}=(\mathcal{S}F_n(f))^k(q(U))\cap q(U)=q((F_n(f))^k)(U)\cap q(U).$$

Since $q$ is a bijection on $F_n(X)\setminus F_1(X)$, we can conclude that:
$$(F_n(f))^k(U)\cap U\neq\emptyset.$$

Since $U$ was an arbitrary neighbourhood of $A$, we see that $A\in NW(F_n(X))$. From Theorem~\ref{CR}, we have that $NW(F_n(X))=Fix(F_n(X))$, and finally:
$$\mathcal{S}F_n(f)(\mathcal{A})=q(F_n(f))q^{-1}(\mathcal{A})=q(F_n(f))(A)=q(A)=\mathcal{A},$$
which shows that $\mathcal{A}\in Fix(\mathcal{S}F_n(f))$.
\end{proof}

\begin{thrm}\label{glavna}
Let $X$ be a compact space, $f:X\to X$ a homeomorphism with a finite chain recurrent set and $n\geqslant2$. Then $$h_{pol}(\mathcal{S}F_n(f),\mathcal{S}F_n(X))=h_{pol}(F_n(f),F_n(X))=n h_{pol}(f,X).$$
\end{thrm}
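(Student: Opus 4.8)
The upper bound is already in place: the two commutative diagrams exhibit $(\mathcal{S}F_n(f),\mathcal{S}F_n(X))$ as a factor of $(F_n(f),F_n(X))$ and the latter as a factor of $(X^{\times n},f^{\times n})$, so by the semi-conjugacy property (7) and the product formula (4) one has $h_{pol}(\mathcal{S}F_n(f))\le h_{pol}(F_n(f))\le h_{pol}(f^{\times n})=n\,h_{pol}(f)$. Hence the entire theorem reduces to the single lower bound $h_{pol}(\mathcal{S}F_n(f),\mathcal{S}F_n(X))\ge n\,h_{pol}(f,X)$, which sandwiches all three quantities. The case $h_{pol}(f)=0$ is immediate since every polynomial entropy is nonnegative, so I assume $h:=h_{pol}(f)>0$. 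Using the power formula (1) and the Lemma above, I first pass to a power so that $NW(f)=Fix(f)$; then $f$, $f^{\times n}$, $F_n(f)$ and $\mathcal{S}F_n(f)$ all have finite non-wandering sets consisting only of fixed points, which is precisely the setting in which the coding machinery of Subsection~\ref{subsec:entropy} and Propositions~\ref{prop1}--\ref{prop2} apply.

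The heart of the argument is to realize the value $nh$ on the locus of distinct coordinates. Let $D=\{(x_1,\dots,x_n)\in X^{\times n}\mid x_i\ne x_j \text{ for } i\ne j\}$, an open, $f^{\times n}$-invariant set on which the symmetrization $\pi$ is an $n!$-to-one local homeomorphism onto $\{A\in F_n(X)\mid \sharp A=n\}$. The complementary fat diagonal is a finite union of invariant sets, each conjugate to the lower power $f^{\times(n-1)}$, so by the finite-union property (3) it carries polynomial entropy only $(n-1)h$. Working with the coding count relative to a suitable compact family, I would show that excising the diagonal does not lower the dominant growth, i.e.\ $h_{pol}(f^{\times n};D)=nh$: the $m^{nh}$ codings come from orbits whose coordinates stay distinct, whereas the diagonal contributes only $m^{(n-1)h}$.

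Next I transport this entropy into the suspension by one conjugacy. I would construct the invariant piece explicitly: starting from the orbits of~\cite{HL,KP} that realize $h$ for $f$ near an entropy-maximizing fixed point, I run $n$ copies of them through pairwise disjoint time-windows, so that at each moment at most one coordinate is on an excursion while the others rest near the fixed point. This yields an $f^{\times n}$-invariant set $A\subseteq D$ that (a) meets each $S_n$-fibre at most once, so that the unordered set recovers the ordered tuple and $\pi|_A$ is injective; (b) contains exactly one non-wandering point; and (c) satisfies $h_{pol}(f^{\times n};A)=nh$. Being an injective local homeomorphism, $\pi|_A$ is then a homeomorphism onto $B:=\pi(A)\subseteq F_n(X)\setminus F_1(X)$ that conjugates $f^{\times n}|_A$ with $F_n(f)|_B$; since $B$ avoids $F_1(X)$, the quotient $q$ restricts to a homeomorphism there, so $H:=q\circ\pi|_A$ conjugates $f^{\times n}|_A$ with $\mathcal{S}F_n(f)|_{q(B)}$, and $q(B)\subseteq\mathcal{S}F_n(X)\setminus\{F_X\}$ again contains exactly one non-wandering point. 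Proposition~\ref{prop2} then gives $h_{pol}(\mathcal{S}F_n(f);q(B))=h_{pol}(f^{\times n};A)=nh$, and monotonicity of the entropy under inclusion yields $h_{pol}(\mathcal{S}F_n(f))\ge nh$, closing the sandwich.

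The main obstacle is the third step --- producing the invariant section $A\subseteq D$ that simultaneously keeps the $n$ coordinates pairwise distinct (so that $\pi|_A$ is injective and the unordered orbit remembers the ordered one) and still realizes the full product entropy $nh$. The difficulty is genuine precisely when $f$ has fewer than $n$ fixed points: then every entropy-maximizing fixed tuple of $f^{\times n}$ has repeated entries, so the relevant orbits crowd their coordinates near a common fixed point and the covering $\pi$ degenerates along the diagonal. Overcoming this requires the time-staggering of the $n$ coded excursions and a careful choice of the compact sets entering the coding family, together with the verification --- carried out through the coding count rather than by directly pushing forward separated sets, which would lose separation near the diagonal --- that neither the passage to the unordered quotient nor the collapsing of $F_1(X)$ diminishes the polynomial growth rate.
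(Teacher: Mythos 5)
Your upper bound and your key lemma coincide with the paper's: the complement of the distinct-coordinate locus $D=\pi^{-1}(G_n)$ in $X^{\times n}$ is a finite union of $f^{\times n}$-invariant diagonal strata, each conjugate to a lower power of $f$ and hence of entropy at most $(n-1)h_{pol}(f)$, so the finite-union property forces $h_{pol}(f^{\times n};D)=nh_{pol}(f)$. The gap is in your third step, the transport of this entropy into the suspension. The invariant set $A\subseteq D$ that is supposed to meet each $S_n$-fibre at most once, contain exactly one non-wandering point, and still realize entropy $nh$ is never constructed: the ``time-staggered excursions'' are only a heuristic (one can only choose initial tuples, and it is not verified that restricting to staggered tuples preserves the count $\sim m^{nh}$ of codings, nor that the resulting set is invariant with a single non-wandering point). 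More seriously, the step you treat as automatic is false: an injective restriction of a local homeomorphism to an arbitrary non-open, non-compact subset $A$ is a continuous bijection onto its image but need not be a homeomorphism. If $A$ contains a sequence $a_k$ converging to a permuted copy $\sigma(a)\notin A$ of some $a\in A$, then $\pi(a_k)\to\pi(a)$ while $a_k\not\to a$, so $(\pi|_A)^{-1}$ is discontinuous; and exactly this situation is to be expected for a set of staggered orbits accumulating on fixed tuples. Since Proposition~\ref{prop2} requires a genuine homeomorphism, the chain breaks and the lower bound $h_{pol}(\mathcal{S}F_n(f))\geqslant nh_{pol}(f)$ is not established.

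The detour is also unnecessary. The paper works with the full sets $G_n$ and $\pi^{-1}(G_n)$ (adjusted by finitely many fixed points so that each contains exactly one non-wandering point): Proposition~\ref{prop2} applied with $H=q$ is legitimate because $q$ is an honest homeomorphism off $F_1(X)$, and gives $h_{pol}(\mathcal{S}F_n(f);q(G_n))=h_{pol}(F_n(f);G_n)$; the remaining identification of $h_{pol}(F_n(f);G_n)$ with $h_{pol}(f^{\times n};\pi^{-1}(G_n))$ is where the $n!$-to-one-ness of $\pi$ enters, and your instinct that $\pi$ is not a conjugacy there is fair (the paper itself states this step as an application of Proposition~\ref{prop2} with $H=\pi$). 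But the correct repair is far lighter than building a section: by Proposition~\ref{prop1} both quantities are suprema of coding entropies over one-point compact families, and for a $\pi$-saturated compact set $\pi^{-1}(K)$ an ordered orbit and its unordered projection have identical coding words, so the counts $\sharp\mathcal{A}_m$ upstairs and downstairs agree letter for letter. Replacing your third step by this comparison of coding counts closes the proof; as written, it does not.
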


\begin{proof}
As we explained, we can consider all the chain recurrent (and then consequently non-wandering) points to be fixed points. Let $$G_n:=\{\{x_1,\ldots,x_n\}\mid x_i\in X,x_i\neq x_j,i,j\in\{1,\ldots n\}\}\subset F_n(X).$$ Since $f$ is a bijection, $G_n$ is $f$-invariant. Similarly, $\mathcal{S}F_n(f)$ and the restriction of $q$ are bijections, so $q(G_n)$ is $\mathcal{S}F_n(f)$-invariant. 
We are going to apply Proposition \ref{prop2} for: 

\begin{align*}
X_1:&=F_n(X), & A:&=G_n, & f_1:&=F_n(f),\\
X_2:&=\mathcal{S}F_n(X), & B:&=q(G_n), & f_2:&=\mathcal{S}F_n(f),\\
H:&=q. & & & &
\end{align*}
  
If $f$ has exactly $n$ fixed points $\{x_1,\ldots,x_n\}$, then the sets $G_n$ and $q(G_n)$ contain exactly one non-wandering point $\{x_1,\ldots,x_n\}$. If $f$ has less than $n$ fixed points, then the sets $G_n$ and $q(G_n)$ do not contain any non-wandering points. In this case, we will add a one-element set $\{\tilde{x}\}$, $f(\tilde{x})=\tilde{x}$ to $G_n$ and $F_1$ to $q(G_n)$, while keeping the same notation, to meet the requirements of Proposition \ref{prop2}. This will not affect the polynomial entropies (see property $(3)$ on page \pageref{finite-union}) and $q$ will remain a homeomorphism on $G_n$. Also, $G_n$ and $q(G_n)$ remain $f$- and $\mathcal{S}F_n(f)$-invariant, respectively. If $f$ has more than $n$ fixed points, say $k$, then there are $\binom{k}{n}$ non-wandering points in $G_n$. Again, without changing the notation, we remove all the non-wandering points from $G_n$, and add  a one-element set $\{\tilde{x}\}$, $f(\tilde{x})=\tilde{x}$ to $G_n$ and $F_1$ to $q(G_n)$. Since every removed point is fixed, the remaining points in $G_n$ constitute an $f$-invariant set. The same is true for $q(G_n)$, being $\mathcal{S}F_n(f)$-invariant.

Seeing that all the conditions are fulfilled, we have:
  
\begin{equation}\label{eq:1}
h_{pol}(\mathcal{S}F_n(f),q(G_n))=h_{pol}(F_n(f),G_n).
\end{equation}
  
Once again, we will apply Proposition \ref{prop2}, but this time for the following setting:
\begin{align*}
X_1:&=X^{\times n},& A:&=\pi^{-1}(G_n),& f_1:&=f^{\times n},\\
X_2:&=F_n(X), & B:&=G_n,&f_2:&=F_n(f),\\
H:&=\pi.& & & &
\end{align*}
  
First note that $\pi^{-1}(G_n)$ is $f^{\times n}$-invariant, since $f$ and the restriction of $\pi$ are bijections. Following the same reasoning as before, there is either exactly one non-wandering point, or there is none (either instantaneously, or after removing all the fixed points), so we add one point to the sets $\pi^{-1}(G_n)$ and $G_n$ - point $(\tilde{x},\ldots,\tilde{x})$ and $\{\tilde{x}\}$, respectively. $H$ will remain a homeomorphism, and the entropies are not changed. We can conclude that
  
\begin{equation}\label{eq:2}
h_{pol}(F_n(f),G_n)=h_{pol}(f^{\times n},\pi^{-1}(G_n)).
\end{equation}

Using the elementary property of polynomial entropy, as well as (\ref{eq:1}) and (\ref{eq:2}), we have the following:
  
\begin{equation*}
\begin{aligned}
h_{pol}(\mathcal{S}F_n(f),\mathcal{S}F_n(X))&\geqslant h_{pol}(\mathcal{S}F_n(f),q(G_n))\\
&=h_{pol}(F_n(f),G_n)\\
&=h_{pol}(f^{\times n},\pi^{-1}(G_n)).
\end{aligned}
\end{equation*}

Therefore, it is enough to show that 
  
\begin{equation}\label{eq:3}
h_{pol}(f^{\times n},\pi^{-1}(G_n))=h_{pol}(F_n(f),F_n(X)).
\end{equation}
  
Note that if we prove that $h_{pol}(f^{\times n},\pi^{-1}(G_n))=n h_{pol}(f,X)$, then (\ref{eq:3}) is true. Namely,
  
\begin{equation}\label{eq:4}
\begin{aligned}
  h_{pol}(F_n(f),F_n(X))&\leqslant h_{pol}(f^{\times n},X^{\times n})\\
  &=n h_{pol}(f,X),
\end{aligned}
\end{equation}

\begin{equation}\label{eq:5}
\begin{aligned}
  h_{pol}(F_n(f),F_n(X))&\geqslant h_{pol}(F_n(f),G_n)\\
  &\stackrel{(\ref{eq:2})}=h_{pol}(f^{\times n},\pi^{-1}(G_n))\\
  &=n h_{pol}(f,X),
\end{aligned}
\end{equation}

so we conclude that $h_{pol}(F_n(f),F_n(X))=n h_{pol}(f,X)$.\\ 

Let us now present the proof of the following equality:

\begin{lem}
$h_{pol}(f^{\times n},\pi^{-1}(G_n))=n h_{pol}(f,X)$.
\end{lem}

\begin{proof}
One inequality is trivially true:
$$h_{pol}(f^{\times n},\pi^{-1}(G_n))\leqslant h_{pol}(f^{\times n},X^{\times n})=n h_{pol}(f,X).$$

Notice that the set $\pi^{-1}(G_n)$ is the set of $n$-tuples where all the coordinates are mutually different. Hence, 
$$ X^{\times n}=\pi^{-1}(G_n)\cup(\cup G_{k,\alpha(k)})\cup(Fix(f^{\times n})\cap\pi^{-1}(G_n)),$$ where the union is taken over all $k$ and $\alpha(k)$,
with $k\geqslant2$ being the least number of mutually equal coordinates and $\alpha(k)$ the set of parameters which determines the positions of these coordinates. For clarity, let us explicitly write one set $G$ from the family of sets $G_{k,\alpha(k)}$:
$$G:=G_{2,(1,2)}\{(x,x,x_3,\ldots,,x_n)\mid x,x_i\in X, i\in\{3,\ldots,n\}\}.$$
There are finitely many sets $G_{k,\alpha(k)}$ for a fixed $n$ and they are all $f^{\times n}$-invariant and compact. Let us remark that they are not necessarily disjoint, which is not a problem for using the finite union property of the polynomial entropy \ref{finite-union}. Since $h_{pol}(f^{\times n},Fix(f^{\times n})\cap\pi^{-1}(G_n))=0$, we have that:

$$n h_{pol}(f,X)=h_{pol}(f^{\times n},X^{\times n})=\max_{k,\alpha(k)}\{h_{pol}(f^{\times n},\pi^{-1}(G_n)),h_{pol}(f^{\times n},G_{k,\alpha(k)})\}.$$

We can conclude that $h_{pol}(f^{\times n},\pi^{-1}(G_n))=n h_{pol}(f,X)$ after we prove that $h_{pol}(f^{\times n},G_{k,\alpha(k)})\leqslant(n-1)h_{pol}(f,X)$.\\ Firstly, let us show that $h_{pol}(f^{\times n},G)=(n-1)h_{pol}(f,X)$. The following diagram, where $\phi(x_1,\ldots,x_{n-1})=(x_1,x_1,x_2,\ldots,x_{n-1})$ is a homeomorphism, establishes that $h_{pol}(f^{\times n},G)=h_{pol}(f^{\times (n-1)},X^{\times(n-1)})=(n-1)h_{pol}(f,X)$.
\[ \begin{tikzcd}
X^{\times(n-1)}\arrow{r}{f^{\times (n-1)}} \arrow[swap]{d}{\phi} & X^{\times(n-1)} \arrow{d}{\phi} \\
G\arrow{r}{f^{\times (n-1)}}& G
\end{tikzcd}
\]
Also, $h_{pol}(f^{\times n},G_{2,\alpha(k)})=h_{pol}(f^{\times n},G)$, for all $\alpha(k)$, because sets $G$ and $G_{2,\alpha(k)}$ are homeomorphic via the map that exchanges proper coordinates, so $h_{pol}(f^{\times n},G_{2,\alpha(k)})=(n-1)h_{pol}(f,X)$.\\
Finally, let $k>2$. Then $h_{pol}(f^{\times n},G_{k,\alpha(k)})=(n-k+1)h_{pol}(f,X)$, using the same reasoning as before, so we have that  $h_{pol}(f^{\times n},G_{k,\alpha(k)})\leqslant(n-1)h_{pol}(f,X)$.

\end{proof}  
This completes the proof of our main theorem, since we proved that $h_{pol}(\mathcal{S}F_n(f),\mathcal{S}F_n(X))\geqslant h_{pol}(F_n(f),F_n(X))$ as well as $h_{pol}(F_n(f),F_n(X))=n h_{pol}(f,X)$.
\end{proof}

Barragán et al. defined the space $(\mathcal{S}F^{m}_n(f),\mathcal{S}F^{m}_n(X))$, $n>m\geqslant1$ in \cite{BST2} as the quotient space $$\mathcal{S}F^{m}_n(X)=F_n(X)/F_m(X),$$ with the quotient topology. They proved that $\mathcal{S}F^{m}_n(f)$ is a continuous map on the compact $\mathcal{S}F^{m}_n(X)$, when $f$ is a continuous map on a compact space $X$. Copying the proof of the main Theorem \ref{glavna} verbatim, we can show that the same results are true for the dynamical system $(\mathcal{S}F^{m}_n(f),\mathcal{S}F^{m}_n(X))$.

\begin{thrm}
Let $X$ be a compact space, $f:X\to X$ a homeomorphism with a finite chain recurrent set and $n>m\geqslant1$. Then $$h_{pol}(\mathcal{S}F^{m}_n(f),\mathcal{S}F^{m}_n(X))=h_{pol}(F_n(f),F_n(X))=n h_{pol}(f,X).$$\qed
\end{thrm}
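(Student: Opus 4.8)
The plan is to reduce the generalized suspension $\mathcal{S}F^{m}_n(X)=F_n(X)/F_m(X)$ to exactly the structure already exploited in Theorem~\ref{glavna}, and then invoke that theorem's machinery essentially unchanged. First I would establish the analogue of the preliminary normalization: since $f$ has a finite, hence periodic, non-wandering set, passing to a suitable power $f^k$ makes every non-wandering point fixed, and because $(\mathcal{S}F^{m}_n(f))^k=\mathcal{S}F^{m}_n(f^k)$ (the same commutation with the quotient map $q_m:F_n(X)\to\mathcal{S}F^{m}_n(X)$ that held for $m=1$), this leaves $h_{pol}(\mathcal{S}F^{m}_n(f))$ unchanged. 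So I may assume $NW(f)=Fix(f)$.

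Next I would record the one structural fact that makes the proof go through: the restriction of the quotient map $q_m$ to $F_n(X)\setminus F_m(X)$ is a homeomorphism onto $\mathcal{S}F^{m}_n(X)\setminus\{F_{X}^{m}\}$, where $F_X^m:=q_m(F_m(X))$ is the single collapsed point. This is the direct analogue of the $m=1$ situation. Crucially, the open set $G_n$ of $n$-element subsets with all coordinates distinct is disjoint from $F_m(X)$ for every $m<n$, so $q_m$ is injective on $G_n$ and $q_m(G_n)$ is homeomorphic to $G_n$. Thus I would apply Proposition~\ref{prop2} verbatim with the assignment $X_1:=F_n(X)$, $X_2:=\mathcal{S}F^{m}_n(X)$, $A:=G_n$, $B:=q_m(G_n)$, $f_1:=F_n(f)$, $f_2:=\mathcal{S}F^{m}_n(f)$, and $H:=q_m|_{G_n}$, handling the three cases (exactly $n$ fixed points, fewer, or more) by the same adding/removing of a fixed one-element orbit as in the main proof, to obtain
\begin{equation*}
h_{pol}(\mathcal{S}F^{m}_n(f),q_m(G_n))=h_{pol}(F_n(f),G_n).
\end{equation*}
The second application of Proposition~\ref{prop2}, relating $h_{pol}(F_n(f),G_n)$ to $h_{pol}(f^{\times n},\pi^{-1}(G_n))$, does not involve the suspension at all and carries over with no change, as does the Lemma computing $h_{pol}(f^{\times n},\pi^{-1}(G_n))=n\,h_{pol}(f,X)$.

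Finally, I would assemble the inequalities exactly as before. Since $(\mathcal{S}F^{m}_n(f),\mathcal{S}F^{m}_n(X))$ is a factor of $(F_n(f),F_n(X))$ via the continuous surjection $q_m$, property $(7)$ gives $h_{pol}(\mathcal{S}F^{m}_n(f))\leqslant h_{pol}(F_n(f),F_n(X))=n\,h_{pol}(f,X)$; and the chain
\begin{equation*}
h_{pol}(\mathcal{S}F^{m}_n(f),\mathcal{S}F^{m}_n(X))\geqslant h_{pol}(\mathcal{S}F^{m}_n(f),q_m(G_n))=h_{pol}(F_n(f),G_n)=n\,h_{pol}(f,X)
\end{equation*}
supplies the reverse inequality. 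I expect the only genuine point needing care, rather than pure transcription, to be the verification that $q_m$ restricts to a homeomorphism on $G_n$ and that $G_n$ stays off the collapsed fibre $F_m(X)$ for all $m<n$; once that disjointness is noted, every set in the main argument ($G_n$, $\pi^{-1}(G_n)$, the homeomorphism $\phi$, the decomposition into the $G_{k,\alpha(k)}$) is defined purely inside $F_n(X)$ and $X^{\times n}$ and is untouched by which subspace $F_m(X)$ we collapse. Hence the proof is indeed the proof of Theorem~\ref{glavna} read off with $q$ replaced by $q_m$.
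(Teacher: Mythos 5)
Your proposal is correct and matches the paper exactly: the paper offers no separate argument for this theorem, stating only that one copies the proof of Theorem~\ref{glavna} verbatim with $q$ replaced by $q_m$. Your identification of the one point requiring verification---that $G_n$ is disjoint from $F_m(X)$ for all $m<n$, so that $q_m|_{G_n}$ is still a homeomorphism---is precisely the observation that justifies the paper's claim.
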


As we mentioned in the introduction, we obtained the results regarding the polynomial entropy on $F_n(X)$, for specific spaces $X=[0,1]$ and $\mathbb{S}^1$ in \cite{DK}. We complement them with the results on the symmetric product suspension, using the main Theorem \ref{glavna} and the fact that $h_{pol}(f,[0,1])=h_{pol}(f,\mathbb{S}^1)=1$ when the set $NW(f)$ is finite. Again, since $[0,1]$ and $\mathbb{S}^1$ are regular curves, in this case the non-wandering set and the chain recurrent set coincide.

\begin{cor}
Let $f:[0,1]\to [0,1]$ be a homeomorphism with finitely many non-wandering points. Then $h_{pol}(F_n(f))=h_{pol}(\mathcal{S}F_n(f))=n$, for all $n\geqslant2$.
\end{cor}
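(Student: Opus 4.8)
The plan is to read the Corollary as the specialization of Theorem \ref{glavna} to $X=[0,1]$, so that the only substantive input beyond the theorem is the value $h_{pol}(f,[0,1])=1$. Since $[0,1]$ is a compact (metric) space and $f$ is a homeomorphism with a finite non-wandering set, the hypotheses of Theorem \ref{glavna} are met verbatim, and it yields
$$h_{pol}(F_n(f))=h_{pol}(\mathcal{S}F_n(f))=n\,h_{pol}(f,[0,1]),\qquad n\geqslant2.$$
Everything therefore reduces to showing $h_{pol}(f,[0,1])=1$, after which substituting this value into the displayed equality completes the proof. This last fact is exactly what was recorded in \cite{DK}, so one may either cite it directly or re-derive it as sketched below.

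For the lower bound I would first observe that the hypothesis forces $f\neq\mathrm{id}$: for the identity one has $NW(f)=[0,1]$, which is infinite. More to the point, since $[0,1]$ is infinite while $NW(f)$ is finite, there is a point of $[0,1]$ outside $NW(f)$, i.e. a wandering point. By Labrousse's result (Proposition 2.1 in \cite{L3}), any homeomorphism of a compact metric space possessing a wandering point has polynomial entropy at least $1$, which gives $h_{pol}(f,[0,1])\geqslant1$.

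For the matching upper bound I would use the coding machinery of Subsection \ref{subsec:entropy}. Passing from $f$ to a suitable power leaves the polynomial entropy unchanged (property (1)), so I may assume $NW(f)=Fix(f)$. The finitely many fixed points then cut $[0,1]$ into finitely many invariant pieces: the fixed points themselves (each contributing entropy $0$) and, between consecutive fixed points $a<b$, a half-open $f$-invariant interval $I$ containing exactly one non-wandering point (the endpoint it includes), on which $f$ is strictly monotone and gradient-like. By the finite union property (property (3)) it suffices to bound $h_{pol}(f,I)$. Fixing a compact $K\subset(a,b)$ bounded away from the fixed endpoints, the gradient-like dynamics carries each orbit through $K$ in a single contiguous block of uniformly bounded length, so there are only $O(n)$ codings of length $n$ relative to the one-point family $\{K\cap I\}$; hence $h_{pol}(\{K\cap I\};I)\leqslant1$. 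Taking the supremum over all such $K$ and invoking Proposition \ref{prop1} gives $h_{pol}(f,I)\leqslant1$, and therefore $h_{pol}(f,[0,1])\leqslant1$.

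The step requiring the most care is this upper bound, specifically the claim that on a compact set bounded away from the fixed points the dynamics moves orbits by a definite amount, so that each orbit dwells in $K$ for a bounded number of steps and the resulting number of words is $O(n)$ rather than growing with a higher polynomial rate. Everything else is either a direct appeal to Theorem \ref{glavna} or a citation, so I anticipate no genuine difficulty beyond this standard one-dimensional estimate. Combining the two bounds yields $h_{pol}(f,[0,1])=1$, and substituting into the first display gives $h_{pol}(F_n(f))=h_{pol}(\mathcal{S}F_n(f))=n$ for all $n\geqslant2$, as claimed.
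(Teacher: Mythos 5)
Your proposal is correct and follows essentially the same route as the paper: specialize Theorem \ref{glavna} to $X=[0,1]$ and combine it with the fact that $h_{pol}(f,[0,1])=1$ for an interval homeomorphism with finite non-wandering set, which the paper simply cites from \cite{DK}. Your optional re-derivation of that fact (Labrousse's wandering-point lower bound plus the $O(n)$ coding estimate on each invariant subinterval via Proposition \ref{prop1}) is sound but not needed beyond the citation.
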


\begin{cor}
Let $f:\mathbb{S}^1\to \mathbb{S}^1$ be a homeomorphism with finitely many non-wandering points. Then $h_{pol}(F_n(f))=h_{pol}(\mathcal{S}F_n(f))=n$, for all $n\geqslant2$.
\end{cor}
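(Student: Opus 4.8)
The plan is to read this corollary off directly from Theorem \ref{glavna}, so that the only genuine content is the base computation $h_{pol}(f,\mathbb{S}^1)=1$ for a circle homeomorphism with a finite non-wandering set. Since $\mathbb{S}^1$ is a compact space and $f:\mathbb{S}^1\to\mathbb{S}^1$ is a homeomorphism whose non-wandering set is finite, the hypotheses of Theorem \ref{glavna} are satisfied for every $n\geqslant2$, and it yields
$$h_{pol}(\mathcal{S}F_n(f))=h_{pol}(F_n(f))=n\,h_{pol}(f,\mathbb{S}^1).$$
Hence it suffices to show $h_{pol}(f,\mathbb{S}^1)=1$, after which both entropies equal $n$ for all $n\geqslant2$, as claimed.

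For the lower bound I would note that $NW(f)$ being finite forces $f$ to possess wandering points, since $\mathbb{S}^1$ is infinite whereas $NW(f)$ is not. Labrousse's result (Proposition $2.1$ in \cite{L3}) then gives $h_{pol}(f,\mathbb{S}^1)\geqslant1$ immediately.

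For the upper bound I would exploit the structure of a circle homeomorphism with finitely many non-wandering points. Passing to a suitable power turns every non-wandering point into a fixed point without changing the polynomial entropy, by property $(1)$. The fixed points then partition $\mathbb{S}^1$ into finitely many open arcs, each of which is $f$-invariant and free of fixed points, so that $f$ acts monotonically on each arc, carrying every interior point from one endpoint to the other. On such an arc the dynamics is conjugate to a monotone, fixed-point-free interval map with two fixed endpoints, for which the number of $(n,\varepsilon)$-separated orbit segments grows only linearly in $n$; the finite-union property $(3)$ then transfers this linear growth to all of $\mathbb{S}^1$, giving $h_{pol}(f,\mathbb{S}^1)\leqslant1$. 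This is exactly the computation performed in \cite{DK}, so the cleanest write-up simply invokes that reference for the equality $h_{pol}(f,\mathbb{S}^1)=1$.

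The main obstacle is the upper bound $h_{pol}(f,\mathbb{S}^1)\leqslant1$, that is, controlling the polynomial growth rate of separated sets on each invariant arc through the coding machinery of Subsection \ref{subsec:entropy}; everything else is routine bookkeeping via Theorem \ref{glavna} and the elementary properties of $h_{pol}$.
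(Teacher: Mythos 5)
Your proposal is correct and follows essentially the same route as the paper: the corollary is obtained by specializing Theorem \ref{glavna} to $X=\mathbb{S}^1$ and combining it with the known fact that $h_{pol}(f,\mathbb{S}^1)=1$ for a circle homeomorphism with finite non-wandering set, which the paper simply cites (via \cite{DK} and \cite{L3}) rather than reproving. Your additional sketch of the lower bound via Labrousse's wandering-point criterion and the upper bound via the decomposition into invariant arcs is a sound justification of that cited fact.
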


Let us now consider a homeomorphism $f:X\to X$ on a compact space $X$ with a non-wandering set that is not necessarily finite. We studied homeomorphisms with at least one wandering point in \cite{DKL} and obtained a lower bound for the polynomial entropy of $(F_n(f),F_n(X))$. We will show a similar result for $\mathcal{S}F_n(f)$, under an additional assumption.

\begin{thrm}
  Let $X$ be a compact space and $f:X\to X$ be a homeomorphism with as least one wandering point $x_0$. If the sets $\alpha_{f}(x_0)$ and $\omega_{f}(x_0)$ are finite, then $h_{pol}(\mathcal{S}F_n(f))\geqslant n$, for all $n\geqslant2$.
\end{thrm}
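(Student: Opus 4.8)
The plan is to produce, for every large $m$, an explicit $(m,\varepsilon)$-separated family in $\mathcal{S}F_n(X)$ of cardinality $\binom{m}{n}$, and then read off the bound from $\log\binom{m}{n}/\log m\to n$. Since $x_0$ is wandering, there is an open neighbourhood $U\ni x_0$ with $f^{j}(U)\cap U=\emptyset$ for all $j\geq1$; applying $f^{-j}$ one gets $f^{j}(x_0)\notin U$ for every $j\neq0$, and in particular the orbit of $x_0$ is injective (a wandering point is never periodic). I would then fix $\varepsilon>0$ with $B(x_0,\varepsilon)\subseteq U$, so that $f^{j}(x_0)\in B(x_0,\varepsilon)$ \emph{only} for $j=0$. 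This single return-once property is the engine of the whole estimate.

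For each subset $S\subseteq\{0,-1,\ldots,-(m-1)\}$ with $|S|=n$ I would set $A_S:=\{f^{a}(x_0)\mid a\in S\}$, a genuine $n$-point set by injectivity, and consider $q(A_S)\in\mathcal{S}F_n(X)$. Because each iterate $\{f^{a+t}(x_0)\mid a\in S\}$ again consists of $n$ distinct points, it never meets $F_1(X)$, so $\mathcal{S}F_n(f)^{t}(q(A_S))=q(\{f^{a+t}(x_0)\mid a\in S\})$ stays off the collapsed point $F_X$ and the dynamics on this family is simply the index shift $a\mapsto a+t$. There are $\binom{m}{n}$ such configurations.

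To separate two configurations $A_S\neq A_{S'}$ inside the window $t\in\{0,\ldots,m-1\}$, I would choose $c\in S\setminus S'$ (swapping roles if necessary) and take $t=-c$. Then the $t$-th iterate of $A_S$ contains $f^{0}(x_0)=x_0$, while every point of the $t$-th iterate of $A_{S'}$ has index $b-c\neq0$ and hence lies outside $B(x_0,\varepsilon)$. This yields two facts at once: the Hausdorff distance between the two iterates is at least $\varepsilon$, and (using $n\geq2$) the iterate of $A_S$ contains, besides $x_0$, another orbit point outside $B(x_0,\varepsilon)$, so its diameter is at least $\varepsilon$. Working with the collapse metric $\tilde d(q(A),q(B))=\min\{d_H(A,B),\,d_H(A,F_1)+d_H(B,F_1)\}$ on $\mathcal{S}F_n(X)$ (legitimate since any compatible metric may be used, by the metric-independence of $h_{pol}$, property (5)), and using $d_H(A,F_1)\geq\tfrac12\operatorname{diam}(A)$, both terms of the minimum are bounded below by $\varepsilon/2$. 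Hence $\{q(A_S)\}$ is $(m,\varepsilon/2)$-separated, giving $S(m,\varepsilon/2;\mathcal{S}F_n(X))\geq\binom{m}{n}$ and therefore $h_{pol}(\mathcal{S}F_n(f))\geq\limsup_m\log\binom{m}{n}/\log m=n$.

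The main obstacle is precisely the one the suspension adds over the symmetric-product setting: collapsing $F_1(X)$ to $F_X$ can contract distances, so a family well separated in $F_n(X)$ might cluster near $F_X$ after passing to the quotient, and the factor inequality $h_{pol}(\mathcal{S}F_n(f))\leq h_{pol}(F_n(f))$ points the wrong way. The diameter estimate above is what defeats this, keeping every relevant iterate uniformly bounded away from $F_X$; this is where one genuinely uses that $x_0$ is wandering, via the return-once ball. The finiteness of $\alpha_f(x_0)$ and $\omega_f(x_0)$ then plays the normalising role it has in \cite{DKL}: it lets us pass to a power of $f$ (changing neither $h_{pol}(f)$ nor $h_{pol}(\mathcal{S}F_n(f))$, by the reductions already recorded) so that both limit sets are fixed and the orbit of $x_0$ is a clean heteroclinic connection, which is the framework in which the count can alternatively be phrased through the coding machinery rather than through the explicit separated family above.
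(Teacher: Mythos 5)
Your proof is correct, but it takes a genuinely different route from the paper. The paper restricts to the orbit closure $Y=\overline{O_f(x_0)}$, observes that $Y$ is compact with finitely many non-wandering points \emph{because} $\alpha_f(x_0)$ and $\omega_f(x_0)$ are finite, applies Theorem \ref{glavna} to convert $h_{pol}(\mathcal{S}F_n(f),\mathcal{S}F_n(Y))$ into $h_{pol}(F_n(f),F_n(Y))$, and then quotes the lower bound $h_{pol}(F_n(f),F_n(Y))\geqslant n$ from \cite{DKL}. You instead build an explicit $(m,\varepsilon/2)$-separated family of $\binom{m}{n}$ configurations along the backward orbit, using the ``return-once'' ball $B(x_0,\varepsilon)$ for separation and a diameter estimate $d_H(A,F_1(X))\geqslant\tfrac12\operatorname{diam}(A)\geqslant\varepsilon/2$ to prevent the collapse of $F_1(X)$ from destroying that separation; this is exactly the point the suspension adds, and you handle it correctly (the metric $\tilde d$ is well defined at the class $F_X$ since $\min\{d_H(A,B),d_H(B,F_1)\}=d_H(B,F_1)$ whenever $A\in F_1(X)$, and it induces the quotient topology because $q$ is $1$-Lipschitz and a continuous bijection from a compact space to a metric space is a homeomorphism, so property (5) applies). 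What each approach buys: the paper's argument is two lines given Theorem \ref{glavna} and \cite{DKL}; yours is self-contained and, notably, never uses the finiteness of $\alpha_f(x_0)$ and $\omega_f(x_0)$ at all --- it proves the stronger statement that a single wandering point already forces $h_{pol}(\mathcal{S}F_n(f))\geqslant n$. Your closing paragraph about passing to a power of $f$ to normalise the limit sets is therefore unnecessary for your argument and could be dropped.
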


\begin{proof}
We will use the same notation as in the proof of Theorem 6 in \cite{DKL}. Define $x_n=(f^n(x_0))$, $n\in\mathbb{Z}$ and $$Y:=\overline{O_f(x_o)}=\{x_n\mid n\in\mathbb{Z}\}\cup\alpha_{f}(x_0)\cup\omega_{f}(x_0).$$
The set $Y$ is compact and contains finitely many non-wandering points, i.e. the points in $\alpha_{f}(x_0)\cup\omega_{f}(x_0)$. Since $\mathcal{S}F_n(Y)$ is an $\mathcal{S}F_n(f)$-invariant subset of $\mathcal{S}F_n(X)$, we have
$$h_{pol}(\mathcal{S}F_n(f),\mathcal{S}F_n(X))\geqslant h_{pol}(\mathcal{S}F_n(f),\mathcal{S}F_n(Y))\stackrel{T.\ref{glavna}}=h_{pol}(F_n(f),F_n(Y)).$$
Now, following the aforementioned proof from \cite{DKL}, which is done by defining a semi-conjugation with a subshift, we have that $$h_{pol}(F_n(f),F_n(Y))\geqslant n.$$
\end{proof}

Again, in the same way, we can prove the previous result for the generalised suspension $\mathcal{S}F^{m}_n(X)$:

\begin{thrm}
  Let $X$ be a compact space and $f:X\to X$ be a homeomorphism with as least one wandering point $x_0$. If the sets $\alpha_{f}(x_0)$ and $\omega_{f}(x_0)$ are finite, then $h_{pol}(\mathcal{S}F^{m}_n(f))\geqslant n$, for all  $n>m\geqslant1$.
\end{thrm}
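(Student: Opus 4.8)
The plan is to run the very same argument used for the symmetric product suspension $\mathcal{S}F_n(f)$ in the previous theorem, replacing $\mathcal{S}F_n$ by $\mathcal{S}F^{m}_n$ throughout and invoking the generalised version of Theorem~\ref{glavna} (the preceding theorem for $\mathcal{S}F^{m}_n$) wherever that proof used Theorem~\ref{glavna}. As there, I would first localise the dynamics to the closure of the orbit of the wandering point: set $x_j=f^{j}(x_0)$ for $j\in\mathbb{Z}$ and
$$Y:=\overline{O_f(x_0)}=\{x_j\mid j\in\mathbb{Z}\}\cup\alpha_f(x_0)\cup\omega_f(x_0).$$
The set $Y$ is compact and $f$-invariant, and every orbit point $x_j$ remains wandering for $f|_Y$ (a witness neighbourhood in $X$ restricts to one in $Y$), so $NW(f|_Y)\subseteq\alpha_f(x_0)\cup\omega_f(x_0)$ is finite by hypothesis. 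Thus $f|_Y$ is a homeomorphism of a compact space with a finite non-wandering set, and the generalised version of Theorem~\ref{glavna} for $\mathcal{S}F^{m}_n$ applies to $f|_Y$.

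Next I would realise $\mathcal{S}F^{m}_n(Y)=F_n(Y)/F_m(Y)$ as a closed, $\mathcal{S}F^{m}_n(f)$-invariant subspace of $\mathcal{S}F^{m}_n(X)=F_n(X)/F_m(X)$. Since $Y\subseteq X$ gives $F_n(Y)\subseteq F_n(X)$ and $F_m(Y)\subseteq F_m(X)$, the inclusion descends to a natural map $\mathcal{S}F^{m}_n(Y)\to\mathcal{S}F^{m}_n(X)$ sending the collapse point of $F_m(Y)$ to that of $F_m(X)$ and acting as the identity elsewhere; this is a continuous injection from a compact space into a Hausdorff space, hence a homeomorphism onto a closed image, and that image is $\mathcal{S}F^{m}_n(f)$-invariant because $F_n(Y)$ is $F_n(f)$-invariant. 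Monotonicity of the polynomial entropy under restriction to an invariant subset, followed by the generalised Theorem~\ref{glavna} applied to $Y$, then yields
$$h_{pol}(\mathcal{S}F^{m}_n(f))\geqslant h_{pol}(\mathcal{S}F^{m}_n(f),\mathcal{S}F^{m}_n(Y))=h_{pol}(F_n(f),F_n(Y)).$$
Finally, the lower bound $h_{pol}(F_n(f),F_n(Y))\geqslant n$ is exactly the conclusion obtained in~\cite{DKL} by semi-conjugating $F_n(f)|_{F_n(Y)}$ with a subshift, and chaining the two displays gives $h_{pol}(\mathcal{S}F^{m}_n(f))\geqslant n$.

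The one point that genuinely needs care, and hence the main obstacle, is the second step: checking that passing from $F_n(Y)/F_m(Y)$ to $F_n(X)/F_m(X)$ produces an embedding onto an invariant subspace and not merely a continuous image. The subtlety is that in the ambient quotient $\mathcal{S}F^{m}_n(X)$ the whole of $F_m(X)$ is identified to a single point, while in $\mathcal{S}F^{m}_n(Y)$ only $F_m(Y)$ is collapsed; one must verify that no two distinct classes of $\mathcal{S}F^{m}_n(Y)$ are merged by the descended map, i.e. that the elements of $F_n(Y)\setminus F_m(Y)$ are not sent into the collapse point. This holds because such a finite subset of $Y$ has strictly more than $m$ points of $X$, so it lies in $F_n(X)\setminus F_m(X)$ and keeps its own class. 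Once this identification is secured, every remaining estimate is inherited verbatim from the previous two theorems and no new computation is required.
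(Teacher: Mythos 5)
Your proposal is correct and follows essentially the same route as the paper, which proves this statement by repeating the argument for $\mathcal{S}F_n$: localise to $Y=\overline{O_f(x_0)}$, note that $f|_Y$ has a finite non-wandering set, apply the generalised version of Theorem~\ref{glavna} to reduce to $h_{pol}(F_n(f),F_n(Y))$, and invoke the lower bound $h_{pol}(F_n(f),F_n(Y))\geqslant n$ from~\cite{DKL}. Your explicit verification that $F_n(Y)/F_m(Y)$ embeds as a closed invariant subspace of $F_n(X)/F_m(X)$ is a point the paper leaves implicit, and it is handled correctly.
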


\begin{cor}
Let $f:[0,1]\to[0,1]$ be a homeomorphism satisfying $f\neq\mathrm{Id}$ and $f^2\neq\mathrm{Id}$. Then $h_{pol}(\mathcal{S}F_n(f))=h_{pol}(F_n(f))=n$, for all $n\geqslant2$.
\end{cor}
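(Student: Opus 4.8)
The plan is to trap both $h_{pol}(F_n(f))$ and $h_{pol}(\mathcal{S}F_n(f))$ between $n$ from below and $n$ from above, the lower bound coming from the lower-bound theorem proved above and the upper bound from the factor chain together with the estimate $h_{pol}(f,[0,1])\leq 1$. The two hypotheses do exactly one thing: they rule out the homeomorphisms of $[0,1]$ all of whose points are periodic (the identity, and the decreasing involutions), thereby forcing a wandering point. The first move is to replace $f$ by $g:=f^2$. The square of any homeomorphism of $[0,1]$ is orientation preserving, so $g$ is increasing, and $g\neq\mathrm{Id}$ by hypothesis; using the identity $\mathcal{S}F_n(f)^2=\mathcal{S}F_n(f^2)$ from the proof of Theorem \ref{glavna} together with the power formula (property (1)) gives
\[
h_{pol}(\mathcal{S}F_n(f))=h_{pol}\big(\mathcal{S}F_n(f)^2\big)=h_{pol}(\mathcal{S}F_n(g)),
\qquad
h_{pol}(f)=h_{pol}(g),
\]
so it suffices to analyse the increasing, nontrivial map $g$.

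For the lower bound I would exhibit a wandering point of $g$ with finite limit sets and invoke the lower-bound theorem above. Since $g$ is increasing and $g\neq\mathrm{Id}$, the open set $[0,1]\setminus Fix(g)$ is nonempty; any $x_0$ in it lies in a maximal complementary interval $(a,b)$ with $a,b\in Fix(g)$, and its $g$-orbit is strictly monotone, converging to the endpoints. Hence $x_0$ is wandering, while $\alpha_g(x_0)$ and $\omega_g(x_0)$ are the singletons $\{a\}$ and $\{b\}$ (in one order or the other), in particular finite. The theorem then yields $h_{pol}(\mathcal{S}F_n(g))\geq n$, and by the displayed reduction $h_{pol}(\mathcal{S}F_n(f))\geq n$.

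For the upper bound I would prove $h_{pol}(g,[0,1])\leq 1$ by a direct separated-set count. Fix $\varepsilon>0$, take an $(N,\varepsilon)$-separated set $y_1<\dots<y_m$, and attach to each consecutive pair $(y_i,y_{i+1})$ a time $k<N$ at which $|g^k(y_i)-g^k(y_{i+1})|\geq\varepsilon$. Because every $g^k$ is increasing, the images of the $y_i$ stay ordered in $[0,1]$, so at each fixed time at most $\lfloor 1/\varepsilon\rfloor$ of these consecutive gaps can have length $\geq\varepsilon$; summing over the $N$ times gives $m\leq 1+N\lfloor 1/\varepsilon\rfloor$, i.e. $S(N,\varepsilon;[0,1])=O(N)$ and $h_{pol}(g)\leq 1$. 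Feeding this into the factor chain closes the sandwich: $n\leq h_{pol}(\mathcal{S}F_n(f))\leq h_{pol}(F_n(f))\leq n\,h_{pol}(g)\leq n$, so all three quantities equal $n$.

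I expect the upper bound to be the delicate point. The equality $h_{pol}(f,[0,1])=1$ recalled earlier was asserted only for a \emph{finite} non-wandering set, whereas the present hypotheses allow $Fix(g)$ to be an arbitrary closed set (for instance a Cantor set), so that statement cannot be quoted verbatim. The monotonicity-based count above is exactly what makes $h_{pol}(g)\leq 1$ survive for an arbitrary fixed-point set, and the reduction to $g=f^2$ is what spares me from separately analysing the odd iterates of a decreasing $f$ when checking the wandering hypothesis.
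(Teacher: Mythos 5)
Your argument is correct and, for its essential content, coincides with the paper's: the paper likewise reduces to the strictly increasing case by passing to $f^2$, observes that $Fix(f)=NW(f)$ forces a wandering point, and notes that $\alpha_f(x_0)$ and $\omega_f(x_0)$ are singletons because $\lim_{n\to\pm\infty}f^n(x_0)$ exists, so the wandering-point theorem gives the lower bound $n$. Where you go beyond the paper is the upper bound: the paper's proof is silent on it, and as you correctly point out, the equality $h_{pol}(f,[0,1])=1$ invoked earlier in the text is stated only for finite $NW(f)$, whereas here $Fix(f^2)$ may be an arbitrary closed set. Your monotonicity count (consecutive gaps of an ordered $(N,\varepsilon)$-separated set, at most $\lfloor 1/\varepsilon\rfloor$ of which can exceed $\varepsilon$ at any fixed time, hence $S(N,\varepsilon)\leqslant 1+N\lfloor 1/\varepsilon\rfloor$) is a clean and correct way to get $h_{pol}(f^2)\leqslant 1$ without any finiteness hypothesis, and combined with $h_{pol}(F_n(f))=h_{pol}(F_n(f^2))\leqslant h_{pol}((f^2)^{\times n})=n\,h_{pol}(f^2)$ it closes the sandwich rigorously. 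So your write-up is the same route as the paper's for the lower bound, with a worthwhile explicit justification of the step the paper leaves implicit.
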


\begin{proof}
We can assume that $f$ is strictly increasing and that $f\neq\mathrm{Id}$ (otherwise, consider $f^2$ instead of $f$). Since in this case ${Fix}(f)={NW}(f)$, there exists at least one wandering point $x_0$. In order to deduce this result we only have to comment on the fact that $\alpha_{f}(x_0)$ and $\omega_{f}(x_0)$ are in fact one-element sets, since $\lim\limits_{n\to\pm\infty}f^n(x_0)$ exists.
\end{proof}

Not all orientation-preserving homeomorphisms $f:\mathbb{S}^1\to \mathbb{S}^1$ with at least one wandering point $\{x_0\}$ will meet the condition of finite $\alpha_{f}(x_0)$ and $\omega_{f}(x_0)$. If the rotational number $\rho(f)$ is irrational, then it is a known fact that the universal $\omega$-limit set is either $\mathbb{S}^1$ or a Cantor set. In the latter case, $f$ possesses a wandering point (see Proposition 6.8 in \cite{GT}) and $\omega_f$ is not finite.\\

\noindent{\bf Acknowledgements.} The author is very grateful to Jelena Kati\'c for many constructive discussions regarding non-wandering and chain-recurrent sets on hyperspaces.

 \end{document}